\newtheorem{theorem}{Theorem}[section]
\newtheorem{lemma}[theorem]{Lemma}
\newtheorem{corollary}[theorem]{Corollary}
\newtheorem{prop}[theorem]{Proposition}
\theoremstyle{definition}
\newtheorem{algorithm}[theorem]{Algorithm}
\newtheorem{definition}[theorem]{Definition}
\newtheorem*{notation}{Notation}
\newtheorem*{assumption}{Assumption A}
\def\ast{{\mathstrut*}}
\def\avg{^{(\delta)}}
\DeclareMathOperator{\bias}{bias}
\def\CN{\mathcal{N}}
\def\CO{\mathcal{O}}
\def\cond#1#2{(#1\mskip1mu|\mskip1mu #2)}
\def\condl#1#2{\bigl(#1\!\bigm|\!#2\bigr)}
\DeclareMathOperator{\cost}{cost}
\let\d=\partial
\let\downto=\downarrow
\def\E{\mathbb{E}}
\let\eps=\varepsilon
\def\hYn{Y^{(\delta)}_{\lfloor\hat N p\rfloor}}
\def\hYN{\hat Y^{(\delta)}_{\hat N}}
\def\lhs{\hskip5mm&\hskip-5mm}
\DeclareMathOperator{\MSE}{MSE}
\def\N{\mathbb{N}}
\let\phi=\varphi
\def\quark{{\setbox0\hbox{$x$}\hbox to\wd0{\hss$\cdot$\hss}}}
\def\R{\mathbb{R}}
\DeclareMathOperator{\RMSE}{RMSE}
\DeclareMathOperator{\tr}{tr}
\DeclareMathOperator{\Var}{Var}
\def\Yn{Y^{(\delta)}_n}
\def\Ynn{Y^{(\delta_n)}_n}
\begin{document}

\title{The Rate of Convergence for Approximate Bayesian Computation}
\author{Stuart Barber, Jochen Voss and Mark Webster}
%\date{29 November 2013}
\maketitle

\begin{abstract}
  Approximate Bayesian Computation (ABC) is a popular computational
  method for likelihood-free Bayesian inference.  The term
  ``likelihood-free'' refers to problems where the likelihood is
  intractable to compute or estimate directly, but where it is
  possible to generate simulated data~$X$ relatively easily given a
  candidate set of parameters~$\theta$ simulated from a prior
  distribution.  Parameters which generate simulated data within some
  tolerance~$\delta$ of the observed data~$x^\ast$ are regarded as
  plausible, and a collection of such $\theta$ is used to estimate the
  posterior distribution $\theta\,|\,X\!=\!x^\ast$.  Suitable choice
  of~$\delta$ is vital for ABC methods to return good approximations
  to~$\theta$ in reasonable computational time.

  While ABC methods are widely used in practice, particularly in
  population genetics, rigorous study of the mathematical properties
  of ABC estimators lags behind practical developments of the method.
  We prove that ABC estimates converge to the exact solution under
  very weak assumptions and, under slightly stronger assumptions,
  quantify the rate of this convergence.  In particular, we show that
  the bias of the ABC estimate is asymptotically proportional
  to~$\delta^2$ as $\delta \downto0$.  At the same time, the
  computational cost for generating one ABC sample increases like
  $\delta^{-q}$ where $q$ is the dimension of the observations.  Rates
  of convergence are obtained by optimally balancing the mean squared
  error against the computational cost.  Our results can be used to
  guide the choice of the tolerance parameter~$\delta$.
\end{abstract}

\vskip1\baselineskip

\noindent
\textbf{keywords:} Approximate Bayesian Computation,
likelihood-free inference,
Monte Carlo methods,
convergence of estimators,
rate of convergence

\vskip1\baselineskip

\noindent
\textbf{MSC2010 classes:} \\
62F12 (Asymptotic properties of estimators), \\
62F15 (Bayesian inference), \\
65C05 (Monte Carlo methods)

%%%%%%%%%%%%%%%%%%%%%%%%%%%%%%%%%%%%%%%%%%%%%%%%%%%%%%%%%%%%%%%%%%%%%%

\section{Introduction}

Approximate Bayesian Computation (ABC) is a popular method for
likelihood-free Bayesian inference.  ABC methods were originally
introduced in population genetics, but are now widely used in
applications as diverse as epidemiology~\citep{TanFraLucSis06,
  BluTra10, WalAllLeeSma10}, materials science~\citep{BorColSis07},
parasitology~\citep{DroPet11}, genetic evolution~\citep{ThoAnd06,
  Fag-etal07, Rat-etal09, WegExc10, Beau10, Wil-etal11}, population
migration~\citep{Gui-etal10} and conservation
studies~\citep{LopBoe10}.

One of the earliest articles about the ABC approach, covering
applications in population genetics, is~\citet{TavBalGriDon97}.  Newer
developments and extensions of the method include placing ABC in an
MCMC context~\citep{Maj-etal03}, sequential ABC~\citep{Sis-etal07},
enhancing ABC with nonlinear regression models~\citep{BluFra10},
agent-based modelling~\citep{SotTav10}, and a Gibbs sampling ABC
approach~\citep{Wil-etal11}.  A survey of recent developments is given
by~\citet{MarEtAl12}.

ABC methods allow for inference in a Bayesian setting where the
parameter $\theta\in\R^p$ of a statistical model is assumed to be
random with a given prior distribution~$f_\theta$, we have observed
data $X \in\R^d$ from a given distribution~$f_{X|\theta}$ depending on
$\theta$ and we want to use these data to draw inference
about~$\theta$.  In the areas where ABC methods are used, the
likelihood $f_{X|\theta}$ is typically not available in an explicit
form.  The term ``likelihood-free'' is used to indicate that ABC
methods do not make use of the likelihood $f_{X|\theta}$, but only
work with samples from the joint distribution of $\theta$ and $X$.  In
the context of ABC methods, the data are usually summarised using a
statistic $S\colon \R^d\to \R^q$, and analysis is then based on $S(X)$
instead of~$X$.  The choice of $S$ affects the accuracy of the ABC
estimates: the estimates can only be expected to converge to the true
posterior if $S$ is a sufficient statistic, otherwise additional error
is introduced.

The basic idea of ABC methods is to replace samples from the exact
posterior distribution $\theta\,|\,X\!=\!x^\ast$ or
$\theta\,|\,S(X)\!=\!s^\ast$ with samples from an approximating
distribution like $\theta \,|\, S(X)\!\approx\!  s^\ast$.  There are
many variants of the basic ABC method available, including different
implementations of the condition $S(X) \approx s^\ast$.  All variants
use a tolerance parameter~$\delta$ which controls the trade-off
between fast generation of samples (large values of $\delta$) and
accuracy of samples (small values of~$\delta$).  The easiest approach
to implement the condition $S(X) \approx s^\ast$, considered in this
paper, is to use $\| S(X) - s^\ast \| \leq \delta$ where $\|\quark\|$
is some norm on~$\R^q$.  Different approaches to choosing the
statistic~$S$ are used; a semi-automatic approach is described in
\citet{FeaPra12}.  In many cases considerable improvements can be
achieved by choosing the norm for comparison of $S(X)$ to $s^\ast$ in
a problem-specific way.

Despite the popularity of ABC methods, theoretical analysis is still
in its infancy.  The aim of this article is to provide a foundation
for such analysis by providing rigorous results about the convergence
properties of the ABC method.  Here, we restrict discussion to the
most basic variant, to set a baseline to which different ABC variants
can be compared.  We consider Monte Carlo estimates of posterior
expectations, using the ABC samples for the estimate.
Proposition~\ref{p:convergence} shows that such ABC estimates converge
to the true value under very weak conditions; once this is established
we investigate the rate of convergence in theorem~\ref{t:rate}.
Similar results, but in the context of estimating posterior densities
rather than posterior expectations can be found in \citet{Blum10} and
\citet{Biau13}.

The choice of norm and of the tolerance parameter~$\delta$ are major
challenges in the practical application of ABC methods, but not many
results are available in the literature.  A numerical study of the
trade-off between accuracy and computational cost, in the context of
sequential ABC methods, can be found in~\citet{SiFiStu13}.
\citet{Wil13} establishes that an ABC method which accepts or rejects
proposals with a probability based on the difference between the
observed and proposed data converges to the correct solution under
assumptions on model or measurement error.

The error of an ABC estimate is affected both by the bias of the ABC
samples, controlled by the tolerance parameter~$\delta$, and by Monte
Carlo error, controlled by the number~$n$ of accepted ABC samples.
One of the key ingredients in our analysis is the result, shown in
lemma~\ref{l:bias}, that for ABC estimates the bias satisfies
\begin{equation*}
  \mathrm{bias} \sim \delta^2.
\end{equation*}
Similarly,
in lemma~\ref{l:cost} we show that for
the ABC estimate we have
\begin{equation*}
  \mathrm{cost} \sim n \delta^{-q},
\end{equation*}
where $q$ is the dimension of the observation~$s^\ast$.

It is well-known that for Monte Carlo methods the error decays, as a
function of computational cost, proportional to $\cost^{-1/2}$, where
the exponent $-1/2$ is independent of dimension \citep[see
\textit{e.g.}][section~3.2.2]{Vo13}.  In contrast, the main result of
this article, theorem~\ref{t:rate}, shows that, under optimal choice
of $\delta$, the basic ABC method satisfies
\begin{equation*}
  \mathrm{error} \sim \mathrm{cost}^{-2 / (q+4)}.
\end{equation*}
Corollary~\ref{c:rate-hat} shows that this result holds whether we fix
the number of accepted samples (controlling the precision of our
estimates but allowing the computational cost to be random) or fix the
number of proposals (allowing the number of accepted samples to be
random).  The former is aesthetically more satisfying, but in practice
most users have a fixed computational budget and hence must fix the
number of proposals.  In either case, the rate of decay for the error
gets worse as the dimension~$q$ increases and even in the
one-dimensional case the exponent $-2/(1+4) = -2/5$ is worse than the
exponent $-1/2$ for Monte Carlo methods.  \citet{FeaPra12} obtain the
same exponent $-2/(q+4)$ for the specific summary statistic $S(X) =
\E\cond{\theta}{X}$.  For the problem of estimating the posterior
density, \citet{Blum10} reports the slightly worse exponent
$-2/(q+5)$.  The difference between Blum's results and ours is due to
the fact that for kernel density estimation an additional bandwidth
parameter must be considered.

We continue by giving a very short introduction to the basic ABC
method in section~\ref{S:ABC}.  The main results,
proposition~\ref{p:convergence} and theorem~\ref{t:rate} are presented
in section~\ref{S:results}, together with their proofs.
Section~\ref{S:fixed-N} shows that our results are independent of
whether the number of proposals or of accepted samples are fixed
in the algorithm.
Section~\ref{S:experiments} illustrates the results of this paper with
the help of numerical experiments.  Finally, in
section~\ref{S:discussion}, we consider the practical implications of
our results.

%%%%%%%%%%%%%%%%%%%%%%%%%%%%%%%%%%%%%%%%%%%%%%%%%%%%%%%%%%%%%%%%%%%%%%

\section{Approximate Bayesian Computation}
\label{S:ABC}

This section gives a short introduction to the basic ABC algorithm.  A
more complete description is, for example, given
in~\citet[section~5.1]{Vo13}.  We describe the algorithm in the
context of the following Bayesian inference problem:
\begin{itemize}
\item A parameter vector $\theta\in\R^p$ is assumed to be random.
  Before observing any data, our belief about its value is summarised
  by the prior distribution~$f_\theta$.  The value of~$\theta$ is
  unknown to us and our aim is to make inference about~$\theta$.
\item The available data $X \in\R^d$ are assumed to be a sample from a
  distribution $f_{X|\theta}$, depending on the parameter~$\theta$.
  Inference about $\theta$ is based on a single observed sample
  $x^\ast$ from this distribution; repeated observations can be
  assembled into a single vector if needed.
\item In the context of ABC, the data are often summarised using a
  statistic $S\colon \R^d \to \R^q$.  Since $X$ is random, $S = S(X)$
  is random with a distribution $f_{S|\theta}$ depending on~$\theta$.
  If a summary statistic is used, inference is based on the value
  $s^\ast = S(x^\ast)$ instead of on the full sample~$x^\ast$.
\end{itemize}
Our aim is to explore the posterior distribution of $\theta$,
\textit{i.e.}\ the conditional distribution of $\theta$ given $X =
x^\ast$, using Monte Carlo methods.  More specifically, we aim to
estimate the posterior expectations
\begin{equation}\label{eq:ABC-answer}
  y = E\condl{h(\theta)}{X=x^\ast}
\end{equation}
for given test functions~$h\colon \R^p\to \R$.  Expectations of this
form allow us to study many relevant properties of the posterior
distribution, including the posterior mean when $h(\theta) = \theta_i$
with $i=1, \ldots, p$ and posterior second moments when $h(\theta) =
\theta_i \theta_j$ with $i,j=1, \ldots, p$.  If $h(\theta) =
1_A(\theta)$ for a given set $A\subseteq\R^p$, the expectation
$E\condl{h(\theta)}{X=x^\ast}$ equals the posterior probability of
hitting~$A$; for example, the CDF can be approximated by choosing
$A=(-\infty, a]$ for $a\in\R$.

The basic ABC method for generating approximate samples from the
posterior distribution is given in the following algorithm.

\begin{algorithm}\label{alg:ABC}
  For a given observation $s^\ast\in\R^q$ and given tolerance
  $\delta>0$, {\em ABC samples} approximating the distribution
  $\theta\,|\,S=s^\ast$ are random samples $\theta^{(\delta)}_j
  \in \R^p$ computed by the following algorithm:
  \begin{algorithmic}[1]
    \STATE let $j \gets 0$
    \WHILE{$j < n$}
    \STATE sample $\theta$ with density $f_\theta(\quark)$
    \STATE sample $X$ with density $f_{X|\theta}\cond{\quark}{\theta}$
    \IF{$\|S(X) - s^\ast\|_A \leq \delta$}
    \STATE let $j \gets j + 1$
    \STATE let $\theta^{(\delta)}_j \gets \theta$
    \ENDIF
    \ENDWHILE
  \end{algorithmic}
\end{algorithm}

\noindent
The norm $\| \quark \|_A$ used in the acceptance criterion is defined
by $\| s \|_A^2 = s^\top A^{-1} s$ for all $s\in\R^q$, where $A$ is a
positive definite symmetric matrix.  This includes the case of the
Euclidean distance $\| \quark \| = \| \quark \|_I$ for $A=I$.  In
practical applications, the matrix~$A$ is often chosen in a
problem-specific way.

Using the output of the algorithm, an estimate for the posterior
expectation~\eqref{eq:ABC-answer} can be obtained as
\begin{equation}\label{eq:ABC-MC-basic}
  \Yn = \frac{1}{n} \sum_{j=1}^n h(\theta^{(\delta)}_j)
\end{equation}
where $\theta^{(\delta)}_1, \ldots, \theta^{(\delta)}_n$ are computed
by algorithm~\ref{alg:ABC}.  Since the output of the ABC algorithm
approximates the posterior distribution, the Monte Carlo
estimate~$\Yn$ can be used as an approximation to the posterior
expectation.

The ABC samples are only approximately distributed according to the
posterior distribution, and thus the estimate $\Yn$ will not exactly
converge to the true value~$y$ as $n\to\infty$.  The quality of the
approximation can be improved by decreasing the tolerance
parameter~$\delta$, but this leads at the same time to a lower
acceptance probability in algorithm~\ref{alg:ABC} and thus,
ultimately, to higher computational cost for obtaining the estimate
$\Yn$.  Hence, a trade-off must be made between accuracy of the
results and speed of computation.

Since the algorithm is not using $x^\ast$ directly, but uses $s^\ast =
S(x^\ast)$ instead, we require $S$ to be a sufficient statistic, so
that we have
\begin{equation*}
  y
  = E\condl{h(\theta)}{X=x^\ast}
  = E\condl{h(\theta)}{S=s^\ast}.
\end{equation*}
If $S$ is not sufficient, an additional error will be introduced.

For {\em application} of the ABC method, knowledge of the
distributions of $\theta$, $X$ and $S=S(X)$ is not required; instead
we assume that we can simulate large numbers of samples of these
random variables.  In contrast, in our {\em analysis} we will assume
that the joint distribution of $\theta$ and $S$ has a density
$f_{S,\theta}$ and we will need to consider properties of this density
in some detail.

To conclude this section, we remark that there are two different
approaches to choosing the sample size used to compute the ABC
estimate~$\Yn$.  If we denote the number of proposals required to
generate $n$ output samples by $N \geq n$, then one approach is to
choose the number $N$ of proposals as fixed; in this case the number
$n \leq N$ of accepted proposals is random.  Alternatively, for given
$n$, the loop in the ABC algorithm could be executed until $n$ samples
are accepted, resulting in random~$N$ and fixed~$n$.  In order to
avoid complications with the definition of $\Yn$ for $n=0$, we follow
the second approach here and defer discussion of the case of fixed $N$
until section~\ref{S:fixed-N}.

%%%%%%%%%%%%%%%%%%%%%%%%%%%%%%%%%%%%%%%%%%%%%%%%%%%%%%%%%%%%%%%%%%%%%%

\section{Results}
\label{S:results}

This section presents the main results of the paper, in
proposition~\ref{p:convergence} and theorem~\ref{t:rate}, followed by
proofs of these results.

Throughout, we assume that the joint distribution of $\theta$ and $S$
has a density $f_{S,\theta}$, and we consider the marginal densities
of $S$ and $\theta$ given by
\begin{equation*}
  f_S(s) = \int_{\R^p} f_{S,\theta}(s,t) \,dt
\end{equation*}
for all $s\in\R^q$ and
\begin{equation*}
  f_\theta(t) = \int_{\R^q} f_{S,\theta}(s,t) \,ds
\end{equation*}
for all $t\in \R^p$, respectively.  We also consider the conditional
density of $\theta$ given $S=s$, defined by
\begin{equation*}
  f_{\theta|S}\cond{t}{s}
  = \begin{cases}
  \frac{f_{S,\theta}(s,t)}{f_S(s)}, & \mbox{if $f_S(s) > 0$, and} \\
  0 & \mbox{otherwise.}
  \end{cases}
\end{equation*}

Our aim is to study the convergence of the estimate~$\Yn$ to $y$ as $n
\to \infty$.  The fact that ABC estimates converge to the correct
value as $\delta \downto 0$ is widely accepted and easily shown
empirically.  This result is made precise in the following
proposition, showing that the convergence holds under very weak
conditions.

\begin{prop}\label{p:convergence}
  Let $h\colon \R^p\to \R$ be such that $\E\bigl(|h(\theta)|\bigr) <
  \infty$.  Then, for $f_S$-almost all $s^\ast\in\R^q$, the ABC
  estimate $\Yn$ given by~\eqref{eq:ABC-MC-basic} satisfies
  \begin{enumerate}
  \item $\displaystyle \lim_{n\to\infty} \Yn = \E\bigl( \Yn \bigr)$
    almost surely for all $\delta>0$; and
  \item $\displaystyle \lim_{\delta\downto 0} \E\bigl( \Yn \bigr)
    = E\condl{h(\theta)}{S=s^\ast}$
    for all $n\in\N$.
  \end{enumerate}
\end{prop}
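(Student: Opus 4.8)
The plan is to prove the two parts separately, since they are of quite different character.

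For part (1), the key observation is that, for fixed $\delta > 0$, the accepted samples $\theta^{(\delta)}_1, \theta^{(\delta)}_2, \ldots$ produced by algorithm~\ref{alg:ABC} are independent and identically distributed: each is simply a draw from the conditional distribution of $\theta$ given the acceptance event $\{\|S(X) - s^\ast\|_A \le \delta\}$. This conditioning is well-defined precisely when that event has positive probability, i.e.\ when $\int_{\|s - s^\ast\|_A \le \delta} f_S(s)\,ds > 0$; since $f_S$ is a probability density, this holds for $f_S$-almost all $s^\ast$ (indeed for all $s^\ast$ in the support of $f_S$, up to a null set). Because $\E(|h(\theta)|) < \infty$ under the prior and conditioning on a positive-probability event can only inflate the expectation by a factor $1/P(\text{accept})$, we have $\E(|h(\theta^{(\delta)}_1)|) < \infty$ as well. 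Then $\Yn$ is an average of i.i.d.\ integrable random variables, and the ordinary strong law of large numbers gives $\Yn \to \E(h(\theta^{(\delta)}_1)) = \E(\Yn)$ almost surely. The only mild subtlety is that $N$ (the number of proposals) is random, but since each proposal is accepted independently with the same positive probability, $N < \infty$ almost surely and the accepted sequence is genuinely i.i.d.; this is a routine verification.

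For part (2), we must show $\E(\Yn) = \E(h(\theta^{(\delta)}_1)) \to E(h(\theta)\mid S = s^\ast)$ as $\delta \downto 0$. Writing $B_\delta = \{s : \|s - s^\ast\|_A \le \delta\}$, we have
\begin{equation*}
  \E\bigl(h(\theta^{(\delta)}_1)\bigr)
  = \frac{\int_{B_\delta} \int_{\R^p} h(t)\, f_{S,\theta}(s,t)\,dt\,ds}
         {\int_{B_\delta} f_S(s)\,ds}
  = \frac{\frac{1}{|B_\delta|}\int_{B_\delta} g(s)\,ds}
         {\frac{1}{|B_\delta|}\int_{B_\delta} f_S(s)\,ds},
\end{equation*}
where $g(s) = \int_{\R^p} h(t) f_{S,\theta}(s,t)\,dt$. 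This is a ratio of averages of $g$ and $f_S$ over the shrinking balls $B_\delta$, so the natural tool is the Lebesgue differentiation theorem: for (Lebesgue-, hence $f_S$-) almost every $s^\ast$, these averages converge to $g(s^\ast)$ and $f_S(s^\ast)$ respectively, and the ratio converges to $g(s^\ast)/f_S(s^\ast) = E(h(\theta)\mid S = s^\ast)$ wherever $f_S(s^\ast) > 0$. The balls $B_\delta$ are genuine Euclidean balls in the metric $\|\cdot\|_A$, which is comparable to the standard Euclidean metric, so they form a regular family shrinking nicely to $s^\ast$ and the differentiation theorem applies.

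The main obstacle is the integrability needed to invoke Lebesgue differentiation for $g$: the classical theorem requires $g \in L^1_{\mathrm{loc}}$, and here $\int_{\R^q} |g(s)|\,ds \le \int\int |h(t)| f_{S,\theta}(s,t)\,dt\,ds = \E(|h(\theta)|) < \infty$ by Fubini, so $g \in L^1(\R^q)$ — this is exactly where the hypothesis $\E(|h(\theta)|) < \infty$ is used, and it resolves the difficulty cleanly. One should also note that the two ``almost all $s^\ast$'' exceptional sets (one from part~(1) for positivity of $f_S$, one from part~(2) for the differentiation points of $g$ and $f_S$) are Lebesgue-null, hence $f_S$-null, so their union is still $f_S$-null and the proposition's ``for $f_S$-almost all $s^\ast$'' quantifier covers both parts simultaneously. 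Everything else — Fubini's theorem to split the joint density, the comparability of $\|\cdot\|_A$ with Euclidean distance — is standard.
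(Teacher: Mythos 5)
Your proof is correct and follows essentially the same route as the paper: the strong law of large numbers for part~(1) (with integrability of $h(\theta^{(\delta)}_1)$ obtained from $\E\bigl(|h(\theta)|\bigr)<\infty$ and the positive acceptance probability), and the Lebesgue differentiation theorem applied separately to the numerator $g=\phi_h$ and denominator $f_S=\phi_1$ for part~(2), with $g\in L^1(\R^q)$ established by Fubini exactly as in the paper. The only cosmetic difference is that the paper reduces to Euclidean balls beforehand via the transformation $\tilde S = A^{-1/2}S$, whereas you invoke the version of the differentiation theorem for a regular family of sets shrinking nicely to $s^\ast$; both are valid.
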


We note that the assumptions required in our version of this result
are very modest.  Since our aim is to estimate the posterior
expectation of $h(\theta)$, the assumption that the prior expectation
$\E\bigl(|h(\theta)|\bigr)$ exists is reasonable.  Similarly, the
phrase ``for $f_S$-almost all $s^\ast\in\R^q$'' in the proposition
indicates that the result holds for all $s^\ast$ in a
set~$A\subseteq\R^q$ with $P\bigl(S(X)\in A\bigr) = 1$.  Since in
practice the value $s^\ast$ will be an observed sample of $S(X)$, this
condition forms no restriction to the applicability of the result.
The proposition could be further improved by removing the assumption
that the distributions of $\theta$ and $S(X)$ have densities.  While a
proof of this stronger result could be given following similar lines
to the proof given below, here we prefer to keep the setup consistent
with what is required for the result in theorem~\ref{t:rate}.

\bigskip

Our second result, theorem~\ref{t:rate}, quantifies the speed of
convergence of $\Yn$ to~$y$.  We consider the mean squared error
\begin{equation*}
  \MSE(\Yn)
  = \E\bigl( ( \Yn - y )^2 \bigr)
  = \Var(\Yn) + \bias(\Yn)^2,
\end{equation*}
and relate this to the computational cost of computing the
estimate~$\Yn$.  For the computational cost, rather than using a
sophisticated model of computation, we restrict ourselves to the
na{\"\i}ve approach of assuming that the time required to obtain the
estimate $\Yn$ in algorithm~\ref{alg:ABC}, denoted $\cost(\Yn)$, is
proportional to $a + bN$, where $a$ and~$b$ are constants and $N$ is
the number of iterations the while-loop in the algorithm has to
perform until the condition $j = n$ is met (\textit{i.e.}\ the number
of proposals required to generate $n$ samples).  To describe the
asymptotic behaviour of MSE and cost, we use the following notation.

\begin{notation}
  For sequences $(a_n)_{n\in\N}$ and $(b_n)_{n\in\N}$ of positive real
  numbers we write $a_n \sim b_n$ to indicate that the limit $c =
  \lim_{n\to\infty} a_n / b_n$ exists and satisfies $0 < |c| < \infty$.
\end{notation}

Our result about the speed of convergence requires the density of
$S(X)$ to have continuous third partial derivatives.  More
specifically, we will use the following technical assumptions on~$S$.

\begin{assumption}
  The density $f_S$ and the function $s \mapsto \int_{\R^p} h(t)
  \,f_{S,\theta}(s,t) \,dt$ are three times continuously
  differentiable in a neighbourhood of~$s^\ast\in\R^q$.
\end{assumption}

\begin{theorem}\label{t:rate}
  Let $h\colon \R^p\to \R$ be such that $\E\bigl(h(\theta)^2\bigr) <
  \infty$ and let $S$ be sufficient and satisfy assumption~A.  Assume
  $\Var\condl{h(\theta)}{S=s^\ast} > 0$ and
  \begin{equation}\label{eq:C-s-ast}
    C(s^\ast)
    = \frac{\Delta \phi_h(s^\ast) - y \cdot \Delta\phi_1(s^\ast)}%
           {2(q+2) \phi_1(s^\ast)}
    \neq 0.
  \end{equation}
  Then, for $f_S$-almost
  all $s^\ast$, the following statements hold:
  \begin{enumerate}
  \item Let $(\delta_n)_{n\in\N}$ be a sequence with $\delta_n \sim
    n^{-1/4}$.  Then the mean squared error satisfies
    \begin{equation*}
      \MSE(\Ynn) \sim \E\bigl(\cost(\Ynn)\bigr)^{-4/(q+4)}
    \end{equation*}
    as $n\to\infty$.
  \item The exponent $-4/(q+4)$ given in the preceding statement is
    optimal: for any sequence $(\delta_n)_{n\in\N}$ with $\delta_n
    \downarrow 0$ as $n\to \infty$ we have
    \begin{equation*}
      \liminf_{n\to\infty}
        \frac{\!\MSE(\Ynn)\;}{\;\E\bigl(\cost(\Ynn)\bigr)^{-4/(q+4)}\!}
      > 0.
    \end{equation*}
  \end{enumerate}
\end{theorem}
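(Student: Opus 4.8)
The plan is to decompose the mean squared error into its bias-squared and variance parts and use the two key lemmas cited in the introduction, namely lemma~\ref{l:bias} giving $\bias(\Yn) \sim \delta^2$ and lemma~\ref{l:cost} giving $\E\bigl(\cost(\Yn)\bigr) \sim n\delta^{-q}$, together with the elementary fact that for an i.i.d.\ Monte Carlo average of $n$ accepted samples, $\Var(\Yn) \sim 1/n$ (the per-sample conditional variance converges to $\Var\condl{h(\theta)}{S=s^\ast}>0$ as $\delta\downto0$, which is exactly why that positivity hypothesis is imposed). So up to constants we have $\MSE(\Ynn) \asymp \delta_n^4 + n^{-1}$ and $\E\bigl(\cost(\Ynn)\bigr) \asymp n\delta_n^{-q}$.

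For part~(1) I would substitute $\delta_n \sim n^{-1/4}$. Then $\delta_n^4 \sim n^{-1}$ and $n^{-1} \sim n^{-1}$, so both contributions to the MSE are of order $n^{-1}$, giving $\MSE(\Ynn) \sim n^{-1}$; meanwhile $\E\bigl(\cost(\Ynn)\bigr) \sim n \cdot n^{q/4} = n^{(q+4)/4}$, hence $\E\bigl(\cost(\Ynn)\bigr)^{-4/(q+4)} \sim n^{-1}$, and the two match. The only subtlety is making the constant in the Notation-sense $\sim$ work out: I need to check that the ratio $\MSE/\cost^{-4/(q+4)}$ actually converges (not merely stays bounded away from $0$ and $\infty$), which follows because each of $n\,\bias(\Ynn)^2$, $n\,\Var(\Ynn)$, and $n^{-(q+4)/4}\E(\cost(\Ynn))$ converges to a finite positive limit by the cited lemmas, so their algebraic combination converges too.

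For part~(2) the point is optimality of the exponent over all admissible tolerance sequences. Write $c_n = \E\bigl(\cost(\Ynn)\bigr)$, so $c_n \asymp n\delta_n^{-q}$, i.e.\ $n \asymp c_n\delta_n^q$. Then $\MSE(\Ynn) \asymp \delta_n^4 + n^{-1} \gtrsim \max\{\delta_n^4,\ (c_n\delta_n^q)^{-1}\}$. Viewing this as a function of the free quantity $\delta_n$ for fixed $c_n$, the lower bound $\max\{\delta^4, c_n^{-1}\delta^{-q}\}$ is minimised when the two terms balance, i.e.\ $\delta^{q+4} \asymp c_n^{-1}$, giving minimal value $\asymp (c_n^{-1})^{4/(q+4)} = c_n^{-4/(q+4)}$. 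Hence for \emph{any} sequence $\delta_n\downto0$ we get $\MSE(\Ynn) \gtrsim c_n^{-4/(q+4)}$, which after dividing yields the claimed $\liminf>0$. I would phrase this cleanly by applying the arithmetic inequality $x + y \ge 2\sqrt{xy}$ (or just $x+y \ge x^{\alpha}y^{1-\alpha}$ up to a constant) to $x = \delta_n^4$, $y = n^{-1}$ with the right weights so that the $\delta_n$-dependence cancels, producing $\delta_n^4 + n^{-1} \gtrsim n^{-4/(q+4)}\delta_n^{-4q/(q+4)}\cdot\delta_n^{4q/(q+4)} \cdot (\cdots)$ — more precisely, choosing the exponent $\lambda = q/(q+4)$ so that $(\delta_n^4)^{\lambda}(n^{-1})^{1-\lambda} \asymp \delta_n^{4q/(q+4)} n^{-4/(q+4)} \asymp (n\delta_n^{-q})^{-4/(q+4)} \asymp c_n^{-4/(q+4)}$.

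The main obstacle is not the arithmetic but handling the stochastic nature of the cost and the randomness of $n$ versus $N$ carefully, and ensuring the lower bound on the variance is genuinely uniform: I must confirm that $n\Var(\Ynn)$ is bounded below by a positive constant for all large $n$, which requires that the per-accepted-sample variance $\Var\bigl(h(\theta^{(\delta_n)}_1)\bigr)$ does not degenerate as $\delta_n\downto0$ — this is where assumption~A and the hypothesis $\Var\condl{h(\theta)}{S=s^\ast}>0$ enter, via lemma~\ref{l:bias} (or its companion) showing this conditional variance is the limit. I would also need to rule out pathological sequences $\delta_n$ that do not tend to $0$ monotonically or that oscillate; the hypothesis $\delta_n\downarrow0$ in part~(2) is exactly what lets the $\liminf$ argument go through, since along any subsequence the balancing inequality above still holds term by term.
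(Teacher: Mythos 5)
Your proposal is correct and follows essentially the same route as the paper: part~(1) is the paper's lemma~\ref{l:optimal} (substitute $\delta_n \sim n^{-1/4}$ into the bias, variance and cost asymptotics from lemma~\ref{l:bias}, corollary~\ref{C:variance} and lemma~\ref{l:cost} and check that the limiting constant is finite and positive), and part~(2) is the paper's lemma~\ref{l:bound}, which uses exactly the weighted Young/AM--GM inequality with exponents $4/(q+4)$ and $q/(q+4)$ that you describe to make the $\delta_n$-dependence cancel. The only inessential difference is that the concerns you flag at the end (randomness of $N$, non-monotone $\delta_n$) are already absorbed by working with the expected cost from lemma~\ref{l:cost} and by applying the pointwise inequality for each sufficiently large $n$.
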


The rate of convergence in the first part of theorem~\ref{t:rate}
should be compared to the corresponding rate for the usual Monte Carlo
estimate.  Since Monte Carlo estimates are unbiased, the root-mean
squared error for a Monte Carlo estimate is proportional to
$1/\sqrt{n}$, while the cost of generating $n$ samples is proportional
to $n$.  Thus, for Monte Carlo estimates we have $\RMSE \sim
\cost^{-1/2}$.  The corresponding exponent from theorem~\ref{t:rate},
obtained by taking square roots, is $-2/(q+4)$, showing slower
convergence for ABC estimates.  This reduced efficiency is a
consequence of the additional error introduced by the bias in the ABC
estimates.

The statement of the theorem implies that, as computational effort is
increased, $\delta$ should be decreased proportional to $n^{-1/4}$.
For this case, the error decreases proportionally to the cost with
exponent $r = -4/(q+4)$.  The second part of the theorem shows that no
choice of $\delta$ can lead to a better (\textit{i.e.}\ more negative)
exponent.  If the tolerances $\delta_n$ are decreased in a way which
leads to an exponent $\tilde r > r$, in the limit for large values of
$n$ such a schedule will always be inferior to the choice $\delta_n =
c n^{-1/4}$, for any constant~$c>0$.  It is important to note that
this result only applies to the limit $n\to\infty$.

The result of theorem~\ref{t:rate} describes the situation where, for
a fixed problem, the tolerance~$\delta$ is decreased.  Caution is
advisable when comparing ABC estimates for different~$q$.  Since the
constant implied in the $\sim$-notation depends on the choice of
problem, and thus on~$q$, the dependence of the error on $q$ for
fixed~$\delta$ is not in the scope of theorem~\ref{t:rate}.  Indeed,
\citet{Blum10} suggests that in practice ABC methods can be
successfully applied for larger values of~$q$, despite results like
theorem~\ref{t:rate}.

\bigskip

Before we turn our attention to proving the results stated above, we
first remark that, without loss of generality, we can assume that the
acceptance criterion in the algorithm uses Euclidean distance $\|
\quark \|$ instead of $\| \quark \|_A$.  This can be achieved by
considering the modified statistic $\tilde S(x) = A^{-1/2} S(x)$ and
$\tilde s^\ast = \tilde S(x^\ast) = A^{-1/2} s^\ast$, where $A^{-1/2}$
is the positive definite symmetric square root of~$A^{-1}$.  Since
\begin{align*}
  \| S(X) - s^\ast \|_A^2
  &= \bigl(S(X) - s^\ast\bigr)^\top A^{-1} \bigl(S(X) - s^\ast\bigr) \\
  &= \Bigl( A^{-1/2} \bigl(S(X) - s^\ast\bigr) \Bigr)^\top
     \Bigl( A^{-1/2} \bigl(S(X) - s^\ast\bigr) \Bigr) \\
  &= \| \tilde S(X) - \tilde s^\ast \|^2,
\end{align*}
the ABC algorithm using $S$ and $\|\quark\|_A$ has identical output to
the ABC algorithm using $\tilde S$ and the Euclidean norm.  Finally, a
simple change of variables shows that the assumptions of
proposition~\ref{p:convergence} and theorem~\ref{t:rate} are satisfied
for $S$ and $\|\quark\|_A$ if and only they are satisfied for $\tilde
S$ and $\|\quark\|$.  Thus, for the proofs we will assume that
Euclidean distance is used in the algorithm.

The rest of this section contains the proofs of
proposition~\ref{p:convergence} and theorem~\ref{t:rate}.
In the proofs it will be convenient to use the following technical
notation.

\begin{definition}\label{def:phi}
  For $h\colon \R^p\to \R$ with $\E\bigl(|h(\theta)|\bigr) < \infty$
  we define $\phi_h\colon \R^q\to \R$ to be
  \begin{equation*}
    \phi_h(s)
    = \int_{\R^p} h(t) f_{S,\theta}(s, t) \,dt
  \end{equation*}
  for all $s\in\R^q$ and $\phi\avg_h\colon \R^q\to \R$ to be
  \begin{equation}\label{eq:phi-delta}
    \phi\avg_h(s^\ast)
    = \frac{1}{\bigl|B(s^\ast, \delta)\bigr|}
            \int_{B(s^\ast, \delta)} \phi_h(s) \,ds,
  \end{equation}
  for all $s^\ast \in\R^q$, where $\bigl|B(s^\ast, \delta) \bigr|$
  denotes the volume of the ball $B(s^\ast, \delta)$.
\end{definition}

Using the definition for $h\equiv 1$ we get $\phi_1 \equiv f_S$ and
for general $h$ we get $\phi_h(s) = f_S(s) \,
\E\condl{h(\theta)}{S=s}$.  Both the exact value~$y$
from~\eqref{eq:ABC-answer} and the mean of the estimator $\Yn$
from~\eqref{eq:ABC-MC-basic} can be expressed in the notation of
definition~\ref{def:phi}.  This is shown in the following lemma.

\begin{lemma}\label{l:phi}
  Let $h\colon \R^p\to \R$ be such that $\E\bigl(|h(\theta)|\bigr) <
  \infty$.  Then
  \begin{equation*}
    \E\condl{h(\theta)}{S=s^\ast}
    = \frac{\phi_h(s^\ast)}{\phi_1(s^\ast)}
  \end{equation*}
  and
  \begin{equation*}
    \E\bigl( \Yn \bigr)
    = \E\bigl( h(\theta^{(\delta)}_1) \bigr)
    = \frac{\phi\avg_h(s^\ast)}{\phi\avg_1(s^\ast)}
  \end{equation*}
  for $f_S$-almost all $s^\ast\in\R^q$.
\end{lemma}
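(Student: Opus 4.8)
The plan is to establish the two displayed identities separately.  The first is essentially the definition of the conditional density.  Since $\phi_1 \equiv f_S$, for every $s^\ast$ with $f_S(s^\ast) > 0$ we have
\begin{align*}
  \frac{\phi_h(s^\ast)}{\phi_1(s^\ast)}
  &= \int_{\R^p} h(t)\, \frac{f_{S,\theta}(s^\ast, t)}{f_S(s^\ast)} \,dt \\
  &= \int_{\R^p} h(t)\, f_{\theta|S}\cond{t}{s^\ast} \,dt
   = \E\condl{h(\theta)}{S=s^\ast},
\end{align*}
and the set $\{s^\ast : f_S(s^\ast) = 0\}$ is $f_S$-null, so the identity holds for $f_S$-almost all $s^\ast$.

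For the second identity the key observation is that algorithm~\ref{alg:ABC} is a rejection sampler: each pass through the while-loop draws $(\theta, X)$ from the joint law $f_\theta\, f_{X|\theta}$ independently of all previous passes, and stores the current value of $\theta$ precisely when $\| S(X) - s^\ast \| \le \delta$, that is, when $S(X) \in B(s^\ast,\delta)$ (recall that, as noted above, we may assume the Euclidean norm).  Hence the accepted values $\theta^{(\delta)}_1, \theta^{(\delta)}_2, \ldots$ are i.i.d., each distributed as $\theta$ conditioned on the event $\{S(X) \in B(s^\ast,\delta)\}$.  This event has positive probability whenever $s^\ast$ lies in the support of the distribution of $S(X)$, and that support has full probability, so for $f_S$-almost all $s^\ast$ the algorithm terminates almost surely and $\theta^{(\delta)}_1$ is well defined.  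Since in addition $\E\bigl(|h(\theta^{(\delta)}_1)|\bigr) \le \E\bigl(|h(\theta)|\bigr) \big/ P\bigl(S(X) \in B(s^\ast,\delta)\bigr) < \infty$, linearity of expectation gives $\E(\Yn) = \frac{1}{n} \sum_{j=1}^n \E\bigl(h(\theta^{(\delta)}_j)\bigr) = \E\bigl(h(\theta^{(\delta)}_1)\bigr)$.

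It then remains to evaluate $\E\bigl(h(\theta^{(\delta)}_1)\bigr) = \E\condl{h(\theta)}{S(X) \in B(s^\ast,\delta)}$ as a ratio of $\E\bigl(h(\theta)\, 1_{\{S(X) \in B(s^\ast,\delta)\}}\bigr)$ and $P\bigl(S(X) \in B(s^\ast,\delta)\bigr)$.  Expressing the joint law of $(\theta, S(X))$ through the density $f_{S,\theta}$ and applying Fubini's theorem, which is justified by $\E\bigl(|h(\theta)|\bigr) < \infty$, the numerator equals
\begin{align*}
  \E\bigl(h(\theta)\, 1_{\{S(X) \in B(s^\ast,\delta)\}}\bigr)
  &= \int_{B(s^\ast,\delta)} \int_{\R^p} h(t)\, f_{S,\theta}(s, t) \,dt\,ds \\
  &= \int_{B(s^\ast,\delta)} \phi_h(s) \,ds
   = \bigl|B(s^\ast,\delta)\bigr|\, \phi\avg_h(s^\ast),
\end{align*}
by definition~\ref{def:phi}, and taking $h \equiv 1$ in the same computation the denominator equals $\bigl|B(s^\ast,\delta)\bigr|\, \phi\avg_1(s^\ast)$; dividing, the volume factors cancel and we obtain $\E\bigl(h(\theta^{(\delta)}_1)\bigr) = \phi\avg_h(s^\ast) / \phi\avg_1(s^\ast)$, as claimed.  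The one step needing genuine care is the verification that the acceptance event has positive probability for $f_S$-almost every $s^\ast$, so that the conditioning --- and indeed the algorithm itself --- are meaningful; everything else is bookkeeping with Fubini and the definition of the conditional density, and I anticipate no real obstacle there beyond keeping the ``$f_S$-almost all'' qualification consistent throughout.
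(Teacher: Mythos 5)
Your proof is correct and follows essentially the same route as the paper: both identify the accepted samples as draws from the law of $\theta$ conditioned on $S\in B(s^\ast,\delta)$ and then swap the order of integration over $B(s^\ast,\delta)\times\R^p$ to produce the ratio $\phi\avg_h(s^\ast)/\phi\avg_1(s^\ast)$, the only cosmetic difference being that the paper writes out the density of $\theta^{(\delta)}_1$ explicitly while you work with the ratio of an expectation to a probability. Your extra checks (positivity of the acceptance probability for $f_S$-almost every $s^\ast$ and integrability of $h(\theta^{(\delta)}_1)$) are welcome details that the paper leaves implicit.
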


\begin{proof}
  From the assumption $\E\bigl(|h(\theta)|\bigr) < \infty$ we can
  conclude
  \begin{equation*}
    \begin{split}
    \int_{\R^q} \int_{\R^p} \bigl| h(t) \bigr| f_{S,\theta}(s,t) \,dt \,ds
    &= \int_{\R^p} \bigl| h(t) \bigr|
                \int_{\R^q} f_{S,\theta}(s,t) \,ds \,dt \\
    &= \int_{\R^p} \bigl| h(t) \bigr| f_\theta(t) \,dt
    = \E\bigl(|h(\theta)|\bigr)
    < \infty,
    \end{split}
  \end{equation*}
  and thus we know that $\int_{\R^p} \bigl| h(t) \bigr|
  f_{S,\theta}(s,t) \,dt < \infty$ for almost all $s\in\R^q$.
  Consequently, the conditional distribution
  $\E\condl{h(\theta)}{S=s^\ast}$ exists for $f_S$-almost all
  $s^\ast\in\R^q$.  Using Bayes' rule we get
  \begin{equation*}
    \begin{split}
    \E\condl{h(\theta)}{S=s^\ast}
    &= \int_{\R^p} h(t) f_{\theta|S}\cond{t}{s^\ast} \,dt \\
    &= \int_{\R^p} h(t) \frac{f_{S,\theta}(s^\ast, t)}{f_S(s^\ast)} \,dt
    = \frac{\phi_h(s^\ast)}{\phi_1(s^\ast)}.
    \end{split}
  \end{equation*}

  On the other hand, the samples $\theta^{(\delta)}_j$ are distributed
  according to the conditional distribution of $\theta$ given $S \in
  B(s^\ast, \delta)$.  Thus, the density of the samples
  $\theta^{(\delta)}_j$ can be written as
  \begin{equation*}
    f_{\theta^{(\delta)}_j}(t)
    = \frac{1}{Z}
            \frac{1}{\bigl|B(s^\ast, \delta)\bigr|}
            \int_{B(s^\ast, \delta)} f_{S,\theta}(s,t) \,ds,
  \end{equation*}
  where the normalising constant $Z$ satisfies
  \begin{equation*}
    Z
    = \frac{1}{\bigl|B(s^\ast, \delta)\bigr|}
            \int_{B(s^\ast, \delta)} f_S(s) \,ds
    = \frac{1}{\bigl|B(s^\ast, \delta)\bigr|}
            \int_{B(s^\ast, \delta)} \phi_1(s) \,ds
    = \phi\avg_1(s^\ast),
  \end{equation*}
  and we get
  \begin{equation*}
  \begin{split}
    \E\bigl( \Yn \bigr)
    &= \E\bigl( h(\theta^{(\delta)}_1) \bigr) \\
    &= \frac{1}{\phi\avg_1(s)} \int_{\R^p} h(t)
             \frac{1}{\bigl|B(s^\ast, \delta)\bigr|}
            \int_{B(s^\ast, \delta)} f_{S,\theta}(s,t) \,ds \,dt \\
    &=  \frac{1}{\phi\avg_1(s)}
           \frac{1}{\bigl|B(s^\ast, \delta)\bigr|}
           \int_{B(s^\ast, \delta)} \phi_h(s) \,ds \\
    &= \frac{\phi\avg_h(s^\ast)}{\phi\avg_1(s^\ast)}.
  \end{split}
  \end{equation*}
  This completes the proof.
\end{proof}

Using these preparations, we can now present a proof of
proposition~\ref{p:convergence}.

\begin{proof}[Proof of proposition~\ref{p:convergence}]
  Since $\E\bigl( |h(\theta)| \bigr) < \infty$ we have
  \begin{equation*}
    \E\bigl(|\Yn|\bigr)
    \leq \E\bigl(|h(\theta_1)|\bigr)
    = \frac{\phi_{|h|}(s^\ast)}{\phi_1(s^\ast)} <\infty
  \end{equation*}
  whenever $\phi_1(s^\ast) = f_S(s^\ast) > 0$, and by the law of large
  numbers $\Yn$ converges to $\E\bigl( \Yn \bigr)$ almost surely.

  For the second statement, since $\phi_1 \equiv f_S \in L^1(\R^q)$,
  we can use the Lebesgue differentiation theorem
  \citep[theorem~7.7]{Rudin66} to conclude that $\phi\avg_1(s^\ast)
  \to \phi_1(s^\ast)$ as $\delta \downto 0$ for almost all
  $s^\ast\in\R^q$.  Similarly, since
  \begin{equation*}
    \int_{\R^q} \bigl| \phi_h(s) \bigr| \,ds
    \leq \int_{\R^p} \bigl| h(t) \bigr| \int_{\R^q} f_{S,\theta}(s,t) \,ds \,dt
    = \int_{\R^p} \bigl| h(t) \bigr| f_\theta(t) \,dt
    < \infty
  \end{equation*}
  and thus $\phi_h \in L^1(\R^q)$, we have $\phi\avg_h(s^\ast) \to
  \phi_h(s^\ast)$ as $\delta \downto 0$ for almost all $s^\ast\in\R^q$
  and using lemma~\ref{l:phi} we get
  \begin{equation*}
    \lim_{\delta\downto 0} \E\bigl( \Yn \bigr)
    = \lim_{\delta\downto 0} \frac{\phi\avg_h(s^\ast)}{\phi\avg_1(s^\ast)}
    = \frac{\phi_h(s^\ast)}{\phi_1(s^\ast)}
    = \E\condl{h(\theta)}{S=s^\ast}
  \end{equation*}
  for almost all $s^\ast\in\R^q$.  This completes the proof.
\end{proof}

For later use we also state the following simple consequence of
proposition~\ref{p:convergence}.

\begin{corollary}\label{C:variance}
  Assume that $\E\bigl(h(\theta)^2\bigr) < \infty$.  Then, for
  $f_S$-almost all $s^\ast\in\R^q$ we have
  \begin{equation*}
    \lim_{\delta\downarrow0} n \Var(\Yn)
    = \Var\condl{h(\theta)}{S=s^\ast},
  \end{equation*}
  uniformly in~$n$.
\end{corollary}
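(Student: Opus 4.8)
The plan is to express $\Var(\Yn)$ exactly in terms of the i.i.d.\ structure of the accepted samples, then apply Proposition~\ref{p:convergence} together with a uniform-integrability argument. First I would observe that, since $\Yn = \frac1n\sum_{j=1}^n h(\theta^{(\delta)}_j)$ is an average of i.i.d.\ random variables (the accepted samples $\theta^{(\delta)}_j$ are independent with common density $f_{\theta^{(\delta)}_1}$ from the proof of Lemma~\ref{l:phi}), we have the exact identity
\begin{equation*}
  n\Var(\Yn) = \Var\bigl(h(\theta^{(\delta)}_1)\bigr)
             = \E\bigl(h(\theta^{(\delta)}_1)^2\bigr) - \E\bigl(h(\theta^{(\delta)}_1)\bigr)^2.
\end{equation*}
This already gives the ``uniformly in $n$'' part of the statement for free, since the right-hand side does not depend on $n$; the content of the corollary is therefore purely the convergence of $\Var(h(\theta^{(\delta)}_1))$ to $\Var\condl{h(\theta)}{S=s^\ast}$ as $\delta\downto0$.

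Next I would handle the two terms separately. For $\E(h(\theta^{(\delta)}_1))$, Lemma~\ref{l:phi} gives $\E(h(\theta^{(\delta)}_1)) = \phi\avg_h(s^\ast)/\phi\avg_1(s^\ast)$, which converges to $\phi_h(s^\ast)/\phi_1(s^\ast) = \E\condl{h(\theta)}{S=s^\ast}$ for $f_S$-almost all $s^\ast$ by exactly the Lebesgue differentiation argument already carried out in the proof of Proposition~\ref{p:convergence}. For $\E(h(\theta^{(\delta)}_1)^2)$, I would apply the same reasoning to the test function $h^2$ in place of $h$: the hypothesis $\E(h(\theta)^2) < \infty$ is precisely the integrability assumption needed to invoke Lemma~\ref{l:phi} and Proposition~\ref{p:convergence} with $h^2$, yielding
\begin{equation*}
  \lim_{\delta\downto0}\E\bigl(h(\theta^{(\delta)}_1)^2\bigr)
  = \E\condl{h(\theta)^2}{S=s^\ast}
\end{equation*}
for $f_S$-almost all $s^\ast$. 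Subtracting the square of the first limit from the second and noting $\Var\condl{h(\theta)}{S=s^\ast} = \E\condl{h(\theta)^2}{S=s^\ast} - \E\condl{h(\theta)}{S=s^\ast}^2$ completes the argument.

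The only subtlety—and the one place where I would be careful rather than routine—is the interaction between the two ``almost all $s^\ast$'' exceptional sets: applying Proposition~\ref{p:convergence} to $h$, to $h^2$, and using the Lebesgue differentiation theorem for $\phi_1$ each produces a $f_S$-null exceptional set, and I want the conclusion to hold off the union of these three null sets, which is still $f_S$-null. I should also check that $\E(h(\theta)^2)<\infty$ implies $\E(|h(\theta)|)<\infty$ when the prior has finite mass (it does, being a probability density), so that the first-moment statements are legitimate. No genuine obstacle arises; the corollary is essentially a corollary of applying Proposition~\ref{p:convergence} twice.
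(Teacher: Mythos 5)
Your proposal is correct and follows essentially the same route as the paper's proof: both reduce $n\Var(\Yn)$ to $\Var\bigl(h(\theta^{(\delta)}_1)\bigr)$ via the i.i.d.\ structure (which gives the uniformity in $n$ for free) and then apply proposition~\ref{p:convergence} to $h^2$ and to $h$ separately, using $\E\bigl(h(\theta)^2\bigr)<\infty \Rightarrow \E\bigl(|h(\theta)|\bigr)<\infty$. Your extra remarks about taking the union of the finitely many $f_S$-null exceptional sets are a sensible point of care that the paper leaves implicit.
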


\begin{proof}
  From the definition of the variance we know
  \begin{equation*}
    \Var\bigl( \Yn \bigr)
    = \frac{1}{n} \Var\bigl( h(\theta^{(\delta)}_j) \bigr)
    = \frac{1}{n} \Bigl( \E\bigl( h(\theta^{(\delta)}_j)^2 \bigr)
            - \E\bigl( h(\theta^{(\delta)}_j) \bigr)^2 \Bigr).
  \end{equation*}
  Applying proposition~\ref{p:convergence} to the function $h^2$
  first, we get $\lim_{\delta\downarrow0}
  \E\bigl(h(\theta^{(\delta)}_j)^2\bigr) =
  \E\condl{h(\theta)^2}{S=s^\ast}$.  Since $\E\bigl(h(\theta)^2\bigr)
  < \infty$ implies $\E\bigl(|h(\theta)|\bigr) < \infty$, we also get
  $\lim_{\delta\downarrow0} \E\bigl(h(\theta^{(\delta)}_j)\bigr) =
  \E\condl{h(\theta)}{S=s^\ast}$ and thus
  \begin{align*}
    \lim_{\delta\downarrow 0} n \Var\bigl( \Yn \bigr)
    &= \E\condl{h(\theta)^2}{S=s^\ast} - \E\condl{h(\theta)}{S=s^\ast}^2 \\
    &= \Var\condl{h(\theta)}{S=s^\ast}.
  \end{align*}
  This completes the proof.
\end{proof}

The rest of this section is devoted to a proof of
theorem~\ref{t:rate}.  We first consider the bias of the
estimator~$\Yn$.  As is the case for Monte Carlo estimates, the bias
of the ABC estimate~$\Yn$ does not depend on the sample size~$n$.  The
dependence on the tolerance parameter~$\delta$ is given in the
following lemma.  This lemma is the key ingredient in the proof of
theorem~\ref{t:rate}.

\begin{lemma}\label{l:bias}
  Assume that $\E\bigl(|h(\theta)|\bigr) < \infty$ and that $S$
  satisfies assumption~A.  Then, for $f_S$-almost all $s^\ast\in\R^q$,
  we have
  \begin{equation*}
    \bias(\Yn)
    = C(s^\ast) \, \delta^2 + O(\delta^3)
  \end{equation*}
  as $\delta \downto 0$ where the constant $C(s^\ast)$ is given by
  equation~\eqref{eq:C-s-ast} and $\Delta$ denotes the Laplace operator.
\end{lemma}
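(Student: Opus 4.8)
The plan is to use lemma~\ref{l:phi}, which expresses the bias as
$$\bias(\Yn) = \E\bigl(\Yn\bigr) - y = \frac{\phi\avg_h(s^\ast)}{\phi\avg_1(s^\ast)} - \frac{\phi_h(s^\ast)}{\phi_1(s^\ast)},$$
and reduce everything to a careful Taylor expansion of the averaging operator $\phi \mapsto \phi\avg$ over the ball $B(s^\ast,\delta)$. The key auxiliary fact is that, for a function $\phi$ which is three times continuously differentiable near $s^\ast$,
$$\phi\avg(s^\ast) = \phi(s^\ast) + \frac{\delta^2}{2(q+2)} \Delta\phi(s^\ast) + O(\delta^3)$$
as $\delta\downto0$. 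This follows by writing $\phi(s) = \phi(s^\ast) + \nabla\phi(s^\ast)\cdot(s-s^\ast) + \tfrac12 (s-s^\ast)^\top \nabla^2\phi(s^\ast)(s-s^\ast) + O(\|s-s^\ast\|^3)$, integrating term by term over $B(s^\ast,\delta)$ and dividing by the volume: the linear term vanishes by symmetry, the odd third-order remainder contributes $O(\delta^3)$, and the quadratic term gives $\tfrac12 \tr\bigl(\nabla^2\phi(s^\ast)\cdot M_\delta\bigr)$ where $M_\delta = |B(s^\ast,\delta)|^{-1}\int_{B(s^\ast,\delta)}(s-s^\ast)(s-s^\ast)^\top\,ds$. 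By symmetry of the ball, $M_\delta = \bigl(\delta^2/(q+2)\bigr) I$, so the quadratic term equals $\tfrac{\delta^2}{2(q+2)}\Delta\phi(s^\ast)$, which explains the factor $q+2$ appearing in~\eqref{eq:C-s-ast}.

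Assumption~A is exactly what guarantees that this expansion applies to both $\phi_1 = f_S$ and $\phi_h(s) = \int h(t) f_{S,\theta}(s,t)\,dt$. So I would write, with $\alpha := \tfrac{\delta^2}{2(q+2)}$,
$$\phi\avg_1(s^\ast) = \phi_1(s^\ast) + \alpha\,\Delta\phi_1(s^\ast) + O(\delta^3), \qquad \phi\avg_h(s^\ast) = \phi_h(s^\ast) + \alpha\,\Delta\phi_h(s^\ast) + O(\delta^3).$$
Since $\phi_1(s^\ast) = f_S(s^\ast) > 0$ for $f_S$-almost all $s^\ast$ (and we only claim the result there), $\phi\avg_1(s^\ast)$ is bounded away from $0$ for small $\delta$, so the ratio is well-defined and we may expand the reciprocal: $\bigl(\phi\avg_1(s^\ast)\bigr)^{-1} = \phi_1(s^\ast)^{-1}\bigl(1 - \alpha\,\Delta\phi_1(s^\ast)/\phi_1(s^\ast) + O(\delta^3)\bigr)$. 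Multiplying out,
$$\frac{\phi\avg_h(s^\ast)}{\phi\avg_1(s^\ast)} = \frac{\phi_h(s^\ast)}{\phi_1(s^\ast)} + \alpha\,\frac{\Delta\phi_h(s^\ast)\,\phi_1(s^\ast) - \phi_h(s^\ast)\,\Delta\phi_1(s^\ast)}{\phi_1(s^\ast)^2} + O(\delta^3).$$
Subtracting $y = \phi_h(s^\ast)/\phi_1(s^\ast)$ and substituting this value of $y$ into the numerator gives $\bias(\Yn) = \alpha\,\bigl(\Delta\phi_h(s^\ast) - y\,\Delta\phi_1(s^\ast)\bigr)/\phi_1(s^\ast) + O(\delta^3) = C(s^\ast)\,\delta^2 + O(\delta^3)$, which is the claim.

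The routine but slightly delicate part is justifying the error bookkeeping: one has to check that the $O(\delta^3)$ terms are genuinely uniform, i.e.\ that the Taylor remainder of a $C^3$ function integrated over a shrinking ball really is $O(\delta^3)$ with a constant depending only on a bound for the third derivatives on a fixed neighbourhood of $s^\ast$, and that these remainders survive the division and multiplication above (this is fine because the denominator stays bounded below). I do not expect a serious obstacle here; the only real content is the symmetry computation giving $M_\delta = \bigl(\delta^2/(q+2)\bigr)I$, which pins down the constant, and the observation that we are free to restrict attention to points where $f_S(s^\ast)>0$, so no division-by-zero issue arises. The $n$-independence of the bias is immediate from lemma~\ref{l:phi}, since $\E(\Yn)$ does not depend on $n$.
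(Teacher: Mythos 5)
Your proposal is correct and follows essentially the same route as the paper's proof: reduce the bias to the ratio $\phi\avg_h/\phi\avg_1 - \phi_h/\phi_1$ via lemma~\ref{l:phi}, Taylor-expand to third order, use the symmetry of the ball to kill the linear term and evaluate the quadratic term as $\frac{\delta^2}{2(q+2)}\Delta\phi$, and then expand the quotient. The only cosmetic difference is that you compute the quadratic term via the second-moment matrix $M_\delta=\bigl(\delta^2/(q+2)\bigr)I$ while the paper diagonalises the Hessian and uses $\int_{B(0,1)}|x|^2\,dx=|B(0,1)|\,q/(q+2)$; these are the same calculation.
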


\begin{proof}
  Using lemma~\ref{l:phi} we can write the bias as
  \begin{equation}\label{eq:bias}
    \bias(\Yn)
    = \frac{\phi\avg_h(s^\ast)}{\phi\avg_1(s^\ast)}
    - \frac{\phi_h(s^\ast)}{\phi_1(s^\ast)}.
  \end{equation}
  To prove the lemma, we have to study the rate of convergence of the
  averages~$\phi\avg_h(s^\ast)$ to the centre value $\phi_h(s^\ast)$
  as $\delta\downto 0$.  Using Taylor's formula we find
  \begin{equation*}
  \begin{split}
    \phi_h(s)
    &= \phi_h(s^\ast)
      + \nabla \phi_h(s^\ast) (s - s^\ast)
      + \frac12 (s - s^\ast)^\top H_{\phi_h}(s^\ast) (s-s^\ast) \\
    &\hskip1cm
      + r_3(s - s^\ast)
  \end{split}
  \end{equation*}
  where $H_{\phi_h}$ denotes the Hessian of $\phi_h$ and the error
  term $r_3$ satisfies
  \begin{equation}\label{eq:error-bound}
    \bigl| r_3(v) \bigr|
    \leq \max_{|\alpha|=3} \sup_{s\in B(s^\ast,\delta)}
                \bigl| \d^\alpha_s \phi_h(s) \bigr|
        \cdot \sum_{|\beta| = 3} \frac{1}{\beta!} \bigl| v^\beta \bigr|
  \end{equation}
  for all $s \in B(s^\ast, \delta)$, and $\d^\alpha_s$ denotes the
  partial derivative corresponding to the multi-index~$\alpha$.
  Substituting the Taylor approximation into
  equation~\eqref{eq:phi-delta}, we find
  \begin{equation}\label{eq:f-delta-1}
  \begin{split}
    \phi\avg_h(s^\ast)
    &= \frac{1}{\bigl|B(s^\ast, \delta)\bigr|} \int_{B(s^\ast, \delta)} \Bigl(
      \phi_h(s^\ast)
      + \nabla \phi_h(s^\ast) (s - s^\ast) \Bigr. \\
    &\hskip2cm \Bigl.
      + \frac12 (s - s^\ast)^\top H_{\phi_h}(s^\ast) (s-s^\ast)
      + r_3(s - s^\ast) \Bigr)
    \,ds \\
    &= \phi_h(s^\ast) + 0 \\
    &\hskip1cm
    + \frac{1}{2\bigl|B(s^\ast, \delta)\bigr|} \int_{B(s^\ast, \delta)}
      (s - s^\ast)^\top H_{\phi_h}(s^\ast) (s-s^\ast)
    \,ds \\
    &\hskip1cm
    + \frac{1}{\bigl|B(s^\ast, \delta)\bigr|} \int_{B(s^\ast, \delta)}
      r_3(s - s^\ast)
    \,ds.
  \end{split}
  \end{equation}
  Since the Hessian $H_{\phi_h}(s^\ast)$ is symmetric and since the
  domain of integration is invariant under rotations we can choose a
  basis in which $H_{\phi_h}(s^\ast)$ is diagonal, such that the
  diagonal elements coincide with the eigenvalues $\lambda_1, \ldots,
  \lambda_q$.  Using this basis we can write the quadratic term
  in~\eqref{eq:f-delta-1} as
  \begin{align*}
    \lhs \frac{1}{2\bigl|B(s^\ast, \delta)\bigr|} \int_{B(s^\ast, \delta)}
      (s - s^\ast)^\top H_{\phi_h}(s^\ast) (s-s^\ast)
    \,ds \\
    &= \frac{1}{2\bigl|B(0, \delta)\bigr|} \int_{B(0, \delta)}
      \sum_{i=1}^q \lambda_i u_i^2
    \,du \\
    &= \frac{1}{2\bigl|B(0, \delta)\bigr|} \sum_{i=1}^q \lambda_i
      \frac{1}{q} \sum_{j=1}^q \int_{B(0, \delta)} u_j^2 \,du \\
    &= \frac{1}{2\bigl|B(0, \delta)\bigr|} \sum_{i=1}^q \lambda_i
      \frac{1}{q} \int_{B(0, \delta)} |u|^2 \,du \\
    &= \tr H_{\phi_h}(s^\ast)
        \cdot \frac{1}{2q\bigl|B(0, \delta)\bigr|}
                    \int_{B(0, \delta)} |u|^2 \,du
  \end{align*}
  where $\tr H_{\phi_h} = \Delta \phi_h$ is the trace of the Hessian.
  Here we used the fact that the value $\int_{B(0, \delta)} u_i^2
  \,du$ does not depend on $i$ and thus equals the average
  $\frac{1}{q} \sum_{j=1}^q \int_{B(0, \delta)} u_j^2 \,du$.
  Rescaling space by a factor $1/\delta$ and using the relation
  $\int_{B(0,1)} |x|^2 \,dx = \bigl|B(0,1)\bigr| \cdot q/(q+2)$ we
  find
  \begin{align*}
    \lhs \frac{1}{2\bigl|B(s^\ast, \delta)\bigr|} \int_{B(s^\ast, \delta)}
      (s - s^\ast)^\top H_{\phi_h}(s^\ast) (s-s^\ast)
    \,ds \\
    &= \Delta\phi_h(s^\ast)
        \cdot \frac{1}{2q \delta^q \bigl|B(0, 1)\bigr|}
                    \int_{B(0, 1)} |y|^2 \delta^{q+2} \,dy \\
    &= \frac{\Delta\phi_h(s^\ast)}{2(q+2)} \delta^2.
  \end{align*}
  For the error term we can use a similar scaling argument in the
  bound~\eqref{eq:error-bound} to get
  \begin{equation*}
     \frac{1}{\bigl|B(s^\ast, \delta)\bigr|}
        \int_{B(s^\ast, \delta)} \bigl| r_3(s - s^\ast) \bigr| \,ds
     \leq C \cdot \delta^3
  \end{equation*}
  for some constant~$C$.  Substituting these
  results back into equation~\eqref{eq:f-delta-1} we find
  \begin{equation}\label{eq:f-delta-2}
    \phi\avg_h(s^\ast)
    = \phi_h(s^\ast) + a_h(s^\ast) \cdot \delta^2 + \CO(\delta^3)
  \end{equation}
  where
  \begin{equation*}
    a_h(s^\ast) = \frac{\Delta\phi_h(s^\ast)}{2(q+2)}.
  \end{equation*}
  Using formula~\eqref{eq:f-delta-2} for $h\equiv 1$ we also get
  \begin{equation*}
    \phi\avg_1(s^\ast)
    = \phi_1(s^\ast)
    + a_1(s^\ast) \cdot \delta^2
    + \CO(\delta^3)
  \end{equation*}
  as $\delta\downto 0$.

  Using representation~\eqref{eq:bias} of the bias, and omitting the
  argument $s^\ast$ for brevity, we can now express the bias in powers
  of~$\delta$:
  \begin{align*}
    \bias(\Yn)
    &= \frac{\phi_1\phi\avg_h - \phi\avg_1\phi_h}{\phi\avg_1\phi_1} \\
    &= \frac{\phi_1 \Bigl(
        \phi_h + a_h \delta^2 + \CO(\delta^3)
      \Bigr) - \Bigl(
        \phi_1 + a_1 \delta^2 + \CO(\delta^3)
      \Bigr)\phi_h}{\phi\avg_1\phi_1} \\
    &= \frac{\phi_1 a_h - \phi_h a_1}{\phi\avg_1\phi_1} \cdot  \delta^2
           + \CO(\delta^3).
  \end{align*}
  Since $1/\phi\avg_1 = 1/\phi_1 \cdot \bigl(1 + \CO(\delta^2)\bigr)$
  and $y = \phi_h / \phi_1$, the right-hand side can be simplified to
  \begin{equation*}
    \bias(\Yn)
    = \frac{a_h(s^\ast) - y a_1(s^\ast)}{\phi_1(s^\ast)} \cdot  \delta^2
           + \CO(\delta^3).
  \end{equation*}
  This completes the proof.
\end{proof}

To prove the statement of theorem~\ref{t:rate}, the bias of $\Yn$ has
to be balanced with the computational cost for computing $\Yn$.  When
$\delta$ decreases, fewer samples will satisfy the acceptance
condition $\| S(X) - s^\ast \| \leq \delta$ and the running time of
the algorithm will increase.  The following lemma makes this statement
precise.

\begin{lemma}\label{l:cost}
  Let $f_S$ be continuous at~$s^\ast$.  Then the expected
  computational cost for computing the estimate $\Yn$ satisfies
  \begin{equation*}
    \E\bigl(\cost(\Yn)\bigr)
    = c_1 + c_2 n \delta^{-q} \bigl( 1 + c_3(\delta) \bigr)
  \end{equation*}
  for all $n\in\N$, $\delta > 0$ where $c_1$ and $c_2$ are constants,
  and $c_3$ does not depend on~$n$ and satisfies $c_3(\delta) \to 0$
  as $\delta \downto 0$.
\end{lemma}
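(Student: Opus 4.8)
The plan is to express the expected cost through the per-proposal acceptance probability and then expand that probability in powers of~$\delta$.

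\textbf{Step 1 (reduce the cost to a negative binomial expectation).} By assumption, $\cost(\Yn)$ is proportional to $a + bN$, where $N$ is the number of iterations of the while-loop in algorithm~\ref{alg:ABC}. Each iteration draws an independent pair $(\theta, X)$ and accepts it exactly when $\| S(X) - s^\ast \| \leq \delta$, an event of probability
\[
  p(\delta) = P\bigl( S(X) \in B(s^\ast, \delta) \bigr)
            = \int_{B(s^\ast, \delta)} f_S(s) \,ds,
\]
independently of the earlier iterations. Since the loop stops precisely at the $n$-th acceptance, $N$ is negative binomial — a sum of $n$ i.i.d.\ geometric waiting times with success probability $p(\delta)$ — so $\E(N) = n / p(\delta)$ whenever $p(\delta) > 0$. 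Hence $\E\bigl(\cost(\Yn)\bigr) = a + b\, n / p(\delta)$, after absorbing the overall proportionality constant into $a$ and~$b$.

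\textbf{Step 2 (asymptotics of $p(\delta)$).} Continuity of $f_S$ at~$s^\ast$ gives directly that the average of $f_S$ over the shrinking ball converges to the centre value, i.e.\ $p(\delta) / \bigl|B(s^\ast,\delta)\bigr| \to f_S(s^\ast)$ as $\delta \downto 0$. Because $\bigl|B(s^\ast,\delta)\bigr| = \bigl|B(0,1)\bigr|\, \delta^q$, this yields $p(\delta) = \bigl|B(0,1)\bigr|\, f_S(s^\ast)\, \delta^q\, \bigl(1 + \eta(\delta)\bigr)$ with $\eta(\delta) \to 0$. For this to be useful one needs $f_S(s^\ast) > 0$; this holds for $f_S$-almost all $s^\ast$ (as $P\bigl(f_S(S) = 0\bigr) = 0$), which matches the setting in which the lemma is applied in theorem~\ref{t:rate}. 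Monotonicity of $\delta \mapsto p(\delta)$, together with $f_S(s^\ast) > 0$ and continuity, also shows $p(\delta) > 0$ for every $\delta > 0$, so $\E(N)$ is finite and the identity of Step~1 is valid for all $n \in \N$ and all $\delta > 0$.

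\textbf{Step 3 (assemble).} Substituting the expansion of $p(\delta)$ into $\E\bigl(\cost(\Yn)\bigr) = a + b\, n / p(\delta)$ gives
\[
  \E\bigl(\cost(\Yn)\bigr)
  = a + \frac{b}{\bigl|B(0,1)\bigr|\, f_S(s^\ast)}\, n\, \delta^{-q}\, \frac{1}{1 + \eta(\delta)}.
\]
Setting $c_1 = a$, $c_2 = b / \bigl(\bigl|B(0,1)\bigr|\, f_S(s^\ast)\bigr)$, and $c_3(\delta) = \bigl(1 + \eta(\delta)\bigr)^{-1} - 1$, which does not depend on~$n$ and tends to~$0$ as $\delta \downto 0$, gives the claimed formula. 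The only step needing any care is Step~2, and there the sole subtlety is the degenerate case $f_S(s^\ast) = 0$ (where $c_2$ would blow up and the loop need not even terminate); restricting to $f_S$-almost all~$s^\ast$, as is already done throughout the paper, removes it, and everything else is routine.
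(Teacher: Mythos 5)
Your proof is correct and follows essentially the same route as the paper's: express the cost as $a+bN$ with $N$ a sum of $n$ geometric waiting times so that $\E(N)=n/p$, and then use continuity of $f_S$ at $s^\ast$ to get $p=c\,\delta^q\bigl(1+o(1)\bigr)$. Your explicit remark that one needs $f_S(s^\ast)>0$ (which holds $f_S$-almost surely) is a small point of extra care that the paper leaves implicit, but it does not change the argument.
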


\begin{proof}
  The computational cost for algorithm~\ref{alg:ABC} is of the form $a
  + bN$ where $a$ and $b$ are constants, and $N$ is the random number
  of iterations of the loop required until $n$ samples are accepted.
  The number of iterations required to generate one ABC sample is
  geometrically distributed with parameter
  \begin{equation*}
    p = P\bigl(S \in B(s^\ast, \delta)\bigr)
  \end{equation*}
  and thus $N$, being the sum of $n$ independent geometrically
  distributed values, has mean $\E(N) = n / p$.  The probability~$p$
  can be written as
  \begin{equation*}
    p
    = \int_{B(s^\ast, \delta)} f_S(s) \,ds
    = \bigl| B(s^\ast, \delta) \bigr| \cdot \phi_1\avg(s^\ast)
    = \delta^q \bigl| B(s^\ast, 1) \bigr| \cdot \phi_1\avg(s^\ast).
  \end{equation*}
  Since $\phi_1 = f_S$ is continuous at~$s^\ast$, we have
  $\phi\avg_1(s^\ast) \to \phi_1(s^\ast)$ as $\delta \downto 0$ and
  thus $p = c \delta^q \bigl( 1 + o(1) \bigr)$ for some constant~$c$.
  Thus, we find that the computational cost satisfies
  \begin{align*}
    \E\bigl(\cost(\Yn)\bigr)
    &= a + b \cdot \E(N) \\
    &= a + b \cdot \frac{n}{c \delta^q \bigl( 1 + o(1) \bigr)} \\
    &= a + \frac{b}{c} \, n \delta^{-q} \, \frac{1}{1 + o(1)},
  \end{align*}
  and the proof is complete.
\end{proof}

Finally, the following two lemmata give the relation between the
approximation error caused by the bias on the one hand and the
expected computational cost on the other hand.

\begin{lemma}\label{l:optimal}
  Let $\delta_n \sim n^{-1/4}$ for $n\in\N$ .  Assume that $\E\bigl(
  h(\theta)^2 \bigr) < \infty$, that $S$ satisfies assumption~A
  and $ \Var\condl{h(\theta)}{S = s^\ast} > 0$.
  Then, for $f_S$-almost all $s^\ast\in\R^q$, the error satisfies
  $\MSE(\Ynn) \sim n^{-1}$, the expected computational cost increases
  as $\E\bigl(\cost(\Ynn)\bigr) \sim n^{1+q/4}$ and we have
  \begin{equation*}
    \MSE(\Ynn) \sim \E\bigl(\cost(\Ynn)\bigr)^{-4/(q+4)}
  \end{equation*}
  as $n\to\infty$.
\end{lemma}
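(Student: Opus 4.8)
The plan is to assemble the three building blocks already established: the bias expansion of lemma~\ref{l:bias}, the variance asymptotics of corollary~\ref{C:variance}, and the cost formula of lemma~\ref{l:cost}, and then simply substitute the schedule $\delta_n \sim n^{-1/4}$. First I would write $\MSE(\Ynn) = \Var(\Ynn) + \bias(\Ynn)^2$ and treat the two terms separately. For the variance term, corollary~\ref{C:variance} gives $n\Var(\Ynn) \to \Var\condl{h(\theta)}{S=s^\ast}$ uniformly in $n$ as $\delta_n \downto 0$; since $\delta_n \downto 0$ and the limit is positive by hypothesis, this yields $\Var(\Ynn) \sim n^{-1}$. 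For the bias term, lemma~\ref{l:bias} gives $\bias(\Ynn) = C(s^\ast)\delta_n^2 + O(\delta_n^3)$ with $C(s^\ast)\neq 0$ (this nonvanishing is part of the hypothesis of theorem~\ref{t:rate}, inherited here); with $\delta_n \sim n^{-1/4}$ we get $\delta_n^2 \sim n^{-1/2}$ and $\delta_n^3 = o(n^{-1/2})$, hence $\bias(\Ynn) \sim n^{-1/2}$ and $\bias(\Ynn)^2 \sim n^{-1}$. Adding the two contributions, both of order $n^{-1}$ with positive constants (so no cancellation), gives $\MSE(\Ynn) \sim n^{-1}$.

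Next I would handle the cost. Lemma~\ref{l:cost} gives $\E\bigl(\cost(\Ynn)\bigr) = c_1 + c_2 n\delta_n^{-q}(1 + c_3(\delta_n))$ with $c_3(\delta_n) \to 0$. Substituting $\delta_n \sim n^{-1/4}$ gives $\delta_n^{-q} \sim n^{q/4}$, so the dominant term is $c_2 n \cdot n^{q/4} = c_2 n^{1+q/4}$, which grows without bound; the additive constant $c_1$ and the factor $1 + c_3(\delta_n) \to 1$ are both negligible in the $\sim$ sense, so $\E\bigl(\cost(\Ynn)\bigr) \sim n^{1+q/4}$.

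Finally I would combine the two displays. From $\E\bigl(\cost(\Ynn)\bigr) \sim n^{1+q/4} = n^{(q+4)/4}$ we get $\E\bigl(\cost(\Ynn)\bigr)^{-4/(q+4)} \sim n^{-1}$, and comparing with $\MSE(\Ynn) \sim n^{-1}$ gives the claimed relation $\MSE(\Ynn) \sim \E\bigl(\cost(\Ynn)\bigr)^{-4/(q+4)}$ by transitivity of $\sim$ (the ratio of two sequences each $\sim n^{-1}$ converges to a finite nonzero constant). The only mild subtlety, and the step I would be most careful about, is the bookkeeping for the $\sim$ relation: I must check that $\sim$ behaves well under the operations used — sums of two sequences of the same order with same-sign constants, products and quotients, and raising to a fixed negative power — none of which is deep, but each needs the constants to be genuinely nonzero and finite, which is exactly what the hypotheses $\Var\condl{h(\theta)}{S=s^\ast} > 0$ and $C(s^\ast) \neq 0$ guarantee. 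I would also note explicitly that corollary~\ref{C:variance}'s uniformity in $n$ is what licenses replacing $\delta$ by the $n$-dependent $\delta_n$ inside the variance limit.
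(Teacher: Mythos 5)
Your proposal is correct and follows essentially the same route as the paper's proof: decompose $\MSE = \Var + \bias^2$, apply lemma~\ref{l:bias}, corollary~\ref{C:variance} and lemma~\ref{l:cost}, substitute $\delta_n \sim n^{-1/4}$, and combine the exponents. One minor caveat: the lemma as stated does \emph{not} assume $C(s^\ast)\neq 0$, so your intermediate claim $\bias(\Ynn)^2 \sim n^{-1}$ is not guaranteed (it degrades to $o(n^{-1})$ if $C(s^\ast)=0$); the paper avoids importing that hypothesis by computing the single limit $\lim_{n\to\infty} n\,\MSE(\Ynn) = \Var\condl{h(\theta)}{S=s^\ast} + C(s^\ast)^2 D^4$ with $D = \lim_{n\to\infty}\delta_n n^{1/4}$, which is strictly positive from the variance hypothesis alone, so the conclusion $\MSE(\Ynn)\sim n^{-1}$ holds in either case.
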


\begin{proof}
  By assumption, the limit $D = \lim \delta_n n^{1/4}$ exists.  Using
  lemma~\ref{l:bias} and corollary~\ref{C:variance}, we find
  \begin{align*}
    \lim_{n\to\infty} \frac{\MSE(\Ynn)}{n^{-1}}
    &= \lim_{n\to\infty} n\Bigl( \Var(\Ynn) + \bigl( \bias(\Ynn) \bigr)^2 \Bigr) \\
    &= \lim_{n\to\infty} n \Var(\Ynn) + \lim_{n\to\infty}
                n \bigl( C(s^\ast) \, \delta_n^2 + O(\delta_n^3) \bigr)^2 \\
    &= \Var\condl{h(\theta)}{S = s^\ast} + \lim_{n\to\infty}
                \Bigl( C(s^\ast) + \frac{\CO(\delta_n^3)}{\delta_n^2} \Bigr)^2
                (\delta_n n^{1/4})^4 \\
    &=  \Var\condl{h(\theta)}{S = s^\ast} + C(s^\ast)^2 D^4.
  \end{align*}
  On the other hand, using lemma~\ref{l:cost}, we find
  \begin{align*}
    \lim_{n\to\infty} \frac{\E\bigl(\cost(\Ynn)\bigr)}{n^{1+q/4}}
    &= \lim_{n\to\infty}
        \frac{c_1 + c_2 n \delta_n^{-q} \bigl( 1 + c_3(\delta_n) \bigr)}{n^{1+q/4}} \\
    &= 0 + c_2 \lim_{n\to\infty} \frac{1}{(\delta _n n^{1/4})^q}
        \bigl(1 + \lim_{n\to\infty} c_3(\delta_n) \bigr) \\
    &= c_2 / D^q.
  \end{align*}
  Finally, combining the rates for cost and error we get the result
  \begin{equation}\label{eq:optimal}
  \begin{split}
    \lhs \lim_{n\to\infty} \frac{\!\MSE(\Ynn)\;}{\; \E\bigl(\cost(\Ynn)\bigr)^{-4/(q+4)} \!} \\
    &= \lim_{n\to\infty} \frac{\MSE(\Ynn)}{n^{-1}}
      / \lim_{n\to\infty} \Bigl( \frac{\E\bigl(\cost(\Ynn)\bigr)}{n^{1+q/4}} \Bigr)^{-4/(q+4)} \\
    &= \Bigl(  \Var\condl{h(\theta)}{S = s^\ast} + C(s^\ast)^2 D^4 \Bigr)
      \cdot \Bigl( \frac{D^q}{c_2} \Bigr)^{-4/(q+4)}.
  \end{split}
  \end{equation}
  Since the right-hand side of this equation is non-zero, this
  completes the proof.
\end{proof}

Lemma~\ref{l:optimal} only specifies the optimal {\em rate} for the
decay of $\delta_n$ as a function of~$n$.  By inspecting the proof, we
can derive the corresponding optimal constant: $\delta_n$ should be
chosen as $\delta_n = D n^{-1/4}$ where $D$ minimises the expression
in equation~\eqref{eq:optimal}.  The optimal value of $D$ can
analytically be found to be
\begin{equation*}
  D_{\mathrm{opt}}
  = \Bigl( \frac{q \Var\condl{h(\theta)}
                {S = s^\ast}}{4 C(s^\ast)^2} \Bigr)^{1/4},
\end{equation*}
where $C(s^\ast)$ is the constant from lemma~\ref{l:bias}.  The
variance $\Var\condl{h(\theta)}{S = s^\ast}$ is easily estimated, but
it seems difficult to estimate $C(s^\ast)$ in a likelihood-free way.

The following result completes the proof of theorem~\ref{t:rate} by
showing that no other choice of $\delta_n$ can lead to a better rate
for the error while retaining the same cost.

\begin{lemma}\label{l:bound}
  For $n\in\N$ let $\delta_n \downto 0$.  Assume that $\E\bigl(
  h(\theta)^2 \bigr) < \infty$ with $\Var\condl{h(\theta)}{S = s^\ast} > 0$,
  that $S$ satisfies assumption~A and
  $C(s^\ast) \neq 0$.
  Then, for $f_S$-almost all $s^\ast\in\R^q$, we have
  \begin{equation*}
    \liminf_{n\to\infty}\frac{\!\MSE(\Ynn)\;}{\;\E\bigl(\cost(\Ynn)\bigr)^{-4/(q+4)}\!}
    > 0.
  \end{equation*}
\end{lemma}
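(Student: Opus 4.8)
The plan is to obtain a lower bound on $\MSE(\Ynn)$ by separately bounding the two contributions $\Var(\Ynn)$ and $\bias(\Ynn)^2$ from below, and to pair each bound with the cost estimate from lemma~\ref{l:cost} in the regime where it dominates. Write $\delta_n = D_n n^{-1/4}$ with $D_n = \delta_n n^{1/4} > 0$; the whole argument is a case distinction on the asymptotic behaviour of the sequence $(D_n)$. The key structural fact, already visible in the proof of lemma~\ref{l:optimal}, is that $\MSE(\Ynn) \gtrsim \Var(\Ynn) \sim n^{-1}$ always, while $\MSE(\Ynn) \gtrsim \bias(\Ynn)^2 \sim C(s^\ast)^2 \delta_n^4$, and that $\E(\cost(\Ynn)) \sim n\delta_n^{-q}$; the exponent $-4/(q+4)$ is precisely the one that makes these two competing lower bounds cross.

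First I would record the ingredients. By corollary~\ref{C:variance}, $n\Var(\Ynn) \to \Var\condl{h(\theta)}{S=s^\ast} =: v > 0$, so for $n$ large, $\Var(\Ynn) \geq \frac{v}{2} n^{-1}$. By lemma~\ref{l:bias}, $\bias(\Ynn) = C(s^\ast)\delta_n^2 + O(\delta_n^3)$ with $C(s^\ast) \neq 0$, so for $n$ large, $\bias(\Ynn)^2 \geq \frac{C(s^\ast)^2}{2}\delta_n^4$. By lemma~\ref{l:cost}, $\E(\cost(\Ynn)) = c_1 + c_2 n\delta_n^{-q}(1 + c_3(\delta_n))$ with $c_2 > 0$ and $c_3(\delta_n) \to 0$; hence for $n$ large, $\E(\cost(\Ynn)) \leq 2c_2\, n\delta_n^{-q} + c_1 \leq K\, n\delta_n^{-q}$ for a suitable constant $K > 0$ (absorbing $c_1$ using $n\delta_n^{-q} \to \infty$). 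Consequently $\E(\cost(\Ynn))^{-4/(q+4)} \geq K^{-4/(q+4)} (n\delta_n^{-q})^{-4/(q+4)} = K^{-4/(q+4)}\, n^{-4/(q+4)}\delta_n^{4q/(q+4)}$. I would then form the ratio
\begin{equation*}
  R_n := \frac{\MSE(\Ynn)}{\E(\cost(\Ynn))^{-4/(q+4)}}
  \geq K^{4/(q+4)}\, \bigl(\tfrac{v}{2}n^{-1} + \tfrac{C(s^\ast)^2}{2}\delta_n^4\bigr)\, n^{4/(q+4)}\delta_n^{-4q/(q+4)},
\end{equation*}
valid for all large $n$. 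Substituting $\delta_n = D_n n^{-1/4}$ gives $n^{-1}\cdot n^{4/(q+4)}\delta_n^{-4q/(q+4)} = D_n^{-4q/(q+4)} n^{-1 + 4/(q+4) + q/(q+4)} = D_n^{-4q/(q+4)}$ for the variance term, and $\delta_n^4 \cdot n^{4/(q+4)}\delta_n^{-4q/(q+4)} = \delta_n^{16/(q+4)} n^{4/(q+4)} = D_n^{16/(q+4)}$ for the bias term. Hence
\begin{equation*}
  R_n \geq K^{4/(q+4)}\Bigl( \tfrac{v}{2}\, D_n^{-4q/(q+4)} + \tfrac{C(s^\ast)^2}{2}\, D_n^{16/(q+4)} \Bigr) =: K^{4/(q+4)}\, g(D_n).
\end{equation*}

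The function $g(D) = \frac{v}{2}D^{-4q/(q+4)} + \frac{C(s^\ast)^2}{2}D^{16/(q+4)}$ on $(0,\infty)$ is continuous, tends to $+\infty$ as $D\to 0$ (first term) and as $D\to\infty$ (second term), and therefore attains a strictly positive minimum $m > 0$ on $(0,\infty)$. This is the crux: it shows that \emph{no} sequence $(D_n)$ — equivalently, no schedule $\delta_n \downto 0$ — can push the ratio to zero, since $g(D_n) \geq m$ for every $n$. Taking $\liminf$, $\liminf_{n\to\infty} R_n \geq K^{4/(q+4)} m > 0$, which is the claim. The main obstacle, such as it is, is the bookkeeping in the exponent arithmetic and making sure the "for $n$ large" thresholds from the three cited results and the absorption of the $c_1, c_3$ terms can be taken uniformly — but none of these genuinely obstruct the argument, since each cited limit is a statement about a fixed $s^\ast$ and the exponent identities $4/(q+4) + q/(q+4) = 1$ and $16/(q+4) - 4q/(q+4) = 4(4-q)/(q+4)$ are exact. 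One should also note the edge cases $q = 4$ (where $g(D) = \frac{v}{2}D^{-2} + \frac{C(s^\ast)^2}{2}D^{2}$, still coercive) and small $q$, all of which are covered since both exponents $-4q/(q+4)$ and $16/(q+4)$ are nonzero and of opposite sign for every $q \geq 1$.
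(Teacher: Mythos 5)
Your overall strategy is sound and, at heart, the same as the paper's: both proofs lower-bound $\MSE(\Ynn)$ by $\Var\bigl(h(\theta^{(\delta_n)}_j)\bigr)/n + \tfrac{1}{2}C(s^\ast)^2\delta_n^4$ using lemma~\ref{l:bias}, and both pair this with $\E\bigl(\cost(\Ynn)\bigr)\sim n\delta_n^{-q}$. Where the paper finishes with Young's inequality, $a+b\geq a^{4/(q+4)}b^{q/(q+4)}$ applied to the two terms, you substitute $\delta_n=D_n n^{-1/4}$ and observe that the resulting one-variable function $g(D)$ is coercive at $0$ and $\infty$, hence bounded below by a positive constant. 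These two finishes are equivalent (the minimum of $g$ is exactly what weighted AM--GM computes), and your version is arguably the more transparent of the two; your exponent arithmetic checks out.

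There is, however, one genuine error of direction in the cost step. To bound the ratio $R_n=\MSE(\Ynn)/\E\bigl(\cost(\Ynn)\bigr)^{-4/(q+4)}$ from below you need an \emph{upper} bound on the denominator $\E\bigl(\cost(\Ynn)\bigr)^{-4/(q+4)}$, equivalently a \emph{lower} bound on $\E\bigl(\cost(\Ynn)\bigr)$. You instead derive the upper bound $\E\bigl(\cost(\Ynn)\bigr)\leq K\,n\delta_n^{-q}$ and conclude $\E\bigl(\cost(\Ynn)\bigr)^{-4/(q+4)}\geq K^{-4/(q+4)}(n\delta_n^{-q})^{-4/(q+4)}$; that is a lower bound on the denominator, and combining a lower bound on the numerator with a lower bound on the denominator does not yield a lower bound on the quotient, so your displayed inequality for $R_n$ does not follow from what precedes it. The repair is immediate: lemma~\ref{l:cost} equally gives $\E\bigl(\cost(\Ynn)\bigr)\geq \tfrac{c_2}{2}\,n\delta_n^{-q}$ for all large $n$ (since $c_1\geq 0$ may be dropped, or absorbed, and $c_3(\delta_n)\to 0$), whence $\E\bigl(\cost(\Ynn)\bigr)^{-4/(q+4)}\leq (c_2/2)^{-4/(q+4)}(n\delta_n^{-q})^{-4/(q+4)}$, and the rest of your argument goes through verbatim with the constant $(c_2/2)^{4/(q+4)}$ in place of $K^{4/(q+4)}$.
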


\begin{proof}
  From lemma~\ref{l:bias} we know that
  \begin{equation}\label{eq:bound1}
  \begin{split}
    \MSE(\Ynn)
    &= \Var\bigl(\Ynn\bigr) + \bigl( \bias(\Ynn) \bigr)^2 \\
    &= \frac{\Var\bigl(h(\theta^{(\delta_n)}_j)\bigr)}{n} + \bigl( C(s^\ast) \, \delta_n^2 + \CO(\delta_n^3) \bigr)^2 \\
    &= \frac{\Var\bigl(h(\theta^{(\delta_n)}_j)\bigr)}{n} + C(s^\ast)^2 \, \delta_n^4 + \CO(\delta_n^5) \\
    &\geq \frac{\Var\bigl(h(\theta^{(\delta_n)}_j)\bigr)}{n} + \frac{C(s^\ast)^2}{2} \, \delta_n^4
  \end{split}
  \end{equation}
  for all sufficiently large~$n$, since $C(s^\ast)^2 > 0$.
  By lemma~\ref{l:cost}, as $n\to\infty$,
  we have
  \begin{equation}\label{eq:bound2}
    \begin{split}
      \lhs \E\bigl( \cost(\Ynn) \bigr)^{-4/(q+4)} \\
      &\sim (n \delta_n^{-q})^{-4/(q+4)} \\
    &= \bigl(n^{-1}\bigr)^{4/(q+4)} \bigl(\delta_n^4\bigr)^{q/(q+4)} \\
    &\sim \Bigl(\frac{4}{q+4}
        \frac{\Var\bigl(h(\theta^{(\delta_n)}_j)\bigr)}{n} \Bigr)^{4/(q+4)}
      \Bigl(\frac{q}{q+4} \, \frac{C(s^\ast)^2}{2} \delta_n^4 \Bigr)^{q/(q+4)}
    \end{split}
  \end{equation}
  where we were able to insert the constant factors into the last term
  because the $\sim$-relation does not see constants.  Using Young's
  inequality we find
  \begin{equation}\label{eq:bound3}
  \begin{split}
    \lhs \frac{\Var\bigl(h(\theta^{(\delta_n)}_j)\bigr)}{n}
            + \frac{C(s^\ast)^2}{2} \delta_n^4 \\
    &\geq \Bigl(\frac{4}{q+4}
        \frac{\Var\bigl(h(\theta^{(\delta_n)}_j)\bigr)}{n} \Bigr)^{4/(q+4)}
      \Bigl(\frac{q}{q+4} \, \frac{C(s^\ast)^2}{2} \delta_n^4 \Bigr)^{q/(q+4)}
  \end{split}
  \end{equation}
  and combining equations \eqref{eq:bound1}, \eqref{eq:bound2}
  and~\eqref{eq:bound3} we get
  \begin{equation*}
    \MSE(\Ynn)
    \geq c \cdot \E\bigl( \cost(\Ynn) \bigr)^{-4/(q+4)}
  \end{equation*}
  for some constant $c > 0$ and all sufficiently large~$n$.  This is
  the required result.
\end{proof}

The statement of theorem~\ref{t:rate} coincides with the statements of
lemmata \ref{l:optimal} and~\ref{l:bound} and thus we have completed
the proof of theorem~\ref{t:rate}.

%%%%%%%%%%%%%%%%%%%%%%%%%%%%%%%%%%%%%%%%%%%%%%%%%%%%%%%%%%%%%%%%%%%%%%

\section{Fixed Number of Proposals}
\label{S:fixed-N}

So far, we have fixed the number $n$ of accepted samples; the
number~$N$ of proposals required to generate $n$ samples is then a
random variable.  In this section we consider the opposite approach,
where the number $\hat N$ of proposals is fixed and the number $\hat
n$ of accepted samples is random.  Compared to the case with fixed
number of samples, the case considered here includes two additional
sources of error. First, with small probability no samples at all will
be accepted and we shall see that this introduces a small additional
bias.  Secondly, the randomness in the value of $\hat n$ introduces
additional variance in the estimate.  We show that the additional
error introduced is small enough not to affect the rate of
convergence.  The main result here is corollary~\ref{c:rate-hat},
below, which shows that the results from theorem~\ref{t:rate} still
hold for random~$\hat n$.

In analogy to the definition of $\Yn$ from
equation~\eqref{eq:ABC-MC-basic}, we define
\begin{equation*}
  \hYN = \begin{cases}
    \frac{1}{\hat n} \sum_{j=1}^{\hat n} h\bigl(\theta_j^{(\delta)}\bigr),
           & \mbox{if $\hat n > 0$, and} \\
    \hat c & \mbox{if $\hat n = 0$,}
  \end{cases}
\end{equation*}
where $\hat c$ is an arbitrary constant (\textit{e.g.}\ zero or the
prior mean) to be used if none of the proposals are accepted.

For a given tolerance $\delta > 0$, each of the $\hat N$ proposals is
accepted with probability
\begin{equation*}
  p = P\bigl(S(X) \in B(s^\ast, \delta)\bigr),
\end{equation*}
and the number of accepted samples is binomially distributed
with mean $\E(\hat n) = \hat N p$.  In this section we consider
arbitrary sequences of $\hat N$ and $\delta$ such that $\hat N p \to
\infty$ and we show that asymptotically the estimators $\hYn$ (using
$n = \lfloor \hat N p \rfloor$ accepted samples) and $\hYN$ (using
$\hat N$ proposals) have the same error.

\begin{prop}\label{p:MSE-fixed-N}
  Let $\hYN$ and $\hYn$ be as above.  Then
  \begin{equation*}
    \frac{\MSE\bigl(\hYN\bigr)}{\MSE\bigl(\hYn\bigr)}
    \longrightarrow 1
  \end{equation*}
  as $\hat N p \to \infty$.
\end{prop}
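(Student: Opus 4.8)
The plan is to condition on the value of $\hat n$ and compare the two estimators term by term. The key observation is that, conditionally on $\hat n = m \geq 1$, the estimator $\hYN$ is exactly an average of $m$ i.i.d.\ copies of $h(\theta_1^{(\delta)})$, so its conditional bias equals $\bias(\Yn)$ (independent of $m$) and its conditional variance equals $\Var(h(\theta_1^{(\delta)}))/m$. From lemma~\ref{l:phi} and lemma~\ref{l:bias} we know $\bias(\Yn) = C(s^\ast)\delta^2 + O(\delta^3)$ and from corollary~\ref{C:variance} that $\Var(h(\theta_1^{(\delta)})) \to \Var\condl{h(\theta)}{S=s^\ast}$; combining the two, $\MSE(\hYn) \sim n^{-1} + \delta^4$ along the given sequences (more precisely it is $\Theta(n^{-1}) + \Theta(\delta^4)$ with $n = \lfloor \hat N p\rfloor$). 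First I would write
\begin{equation*}
  \MSE\bigl(\hYN\bigr)
  = \E\Bigl[ \bigl(\hYN - y\bigr)^2 \Bigr]
  = \E\Bigl[ \E\bigl[ (\hYN - y)^2 \bigm| \hat n \bigr] \Bigr]
\end{equation*}
and split the outer expectation according to whether $\hat n = 0$ or $\hat n \geq 1$.

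On the event $\hat n \geq 1$, the conditional MSE is $\bias(\Yn)^2 + \Var(h(\theta_1^{(\delta)}))/\hat n$, so the contribution is $\bias(\Yn)^2 \cdot P(\hat n \geq 1) + \Var(h(\theta_1^{(\delta)})) \cdot \E[\mathbf 1_{\hat n\geq 1}/\hat n]$. The bias part is already $(1+o(1))\bias(\Yn)^2$ since $P(\hat n = 0) = (1-p)^{\hat N} \to 0$ as $\hat N p \to\infty$. For the variance part, the crux is to show $\E[\mathbf 1_{\hat n\geq 1}/\hat n] \sim 1/\E(\hat n) = 1/(\hat N p) \sim 1/n$. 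Since $\hat n$ is $\mathrm{Binomial}(\hat N, p)$ with mean $\mu := \hat N p \to\infty$, this is a standard but slightly delicate fact: one direction follows from Jensen ($\E[1/\hat n \mid \hat n\geq 1] \geq 1/\E[\hat n\mid \hat n\geq 1]$), and the matching upper bound follows from a concentration argument — on the event $\{\hat n \geq \mu/2\}$, which has probability $1 - o(1)$ (e.g.\ by Chebyshev, using $\Var(\hat n) = \hat N p(1-p) \leq \mu$), we have $1/\hat n \leq 2/\mu$, and on the complement $1/\hat n \leq 1$ but the probability is $o(1/\mu)$ by a Chernoff/Bernstein tail bound; alternatively one can cite the known asymptotic $\E[1/\hat n \mid \hat n\geq 1] = \mu^{-1}(1 + O(\mu^{-1}))$ for binomial random variables. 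On the event $\hat n = 0$ the contribution to $\MSE(\hYN)$ is $(\hat c - y)^2 (1-p)^{\hat N}$, which is $O((1-p)^{\hat N}) = o(\mu^{-1})$ — in fact exponentially small in $\mu$ — hence negligible compared with $n^{-1}$.

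Putting the pieces together, $\MSE(\hYN) = \Var\condl{h(\theta)}{S=s^\ast}\,(1+o(1))/n + \bias(\Yn)^2(1+o(1)) + o(n^{-1})$, which has exactly the same leading behaviour as $\MSE(\hYn) = \Var(h(\theta_1^{(\delta)}))/n + \bias(\Yn)^2$ with $n = \lfloor\hat N p\rfloor$; since $\Var(h(\theta_1^{(\delta)})) \to \Var\condl{h(\theta)}{S=s^\ast} > 0$ (the positivity, granted by the hypothesis carried over implicitly from theorem~\ref{t:rate}, is what keeps the denominator bounded away from $0$ so the ratio is well-behaved even when the bias term vanishes), the ratio tends to $1$. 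The main obstacle is the binomial reciprocal-moment estimate $\E[\mathbf 1_{\hat n\geq 1}/\hat n] \sim 1/(\hat N p)$; everything else is bookkeeping with the already-established lemmas. I would handle that estimate by the two-sided argument sketched above (Jensen for the lower bound, truncation at $\mu/2$ plus a tail bound for the upper bound), which avoids needing the exact generating-function formula for the negative moment.
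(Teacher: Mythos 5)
Your proposal follows essentially the same route as the paper: condition on $\hat n$, observe that conditionally on $\hat n = m \geq 1$ the estimator is an ordinary $m$-sample ABC average, isolate the event $\hat n = 0$ as exponentially negligible since $P(\hat n = 0) = (1-p)^{\hat N} \leq e^{-\hat N p}$, and reduce everything to the reciprocal-moment estimate $\E\bigl(\tfrac{\hat N p}{\hat n} 1_{\{\hat n > 0\}}\bigr) \to 1$, proved by concentration of the binomial around its mean with a Chernoff-type bound for the lower tail (the paper uses the relative-entropy bound of Dembo and Zeitouni for exactly this purpose; Chebyshev alone would give only $O(1/\hat N p)$ for the lower-tail probability, which is the same order as the main term and hence insufficient, so your instinct to reach for Chernoff there is correct).

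There is one concrete place where the sketch as written does not deliver the claimed conclusion: truncating at $\mu/2$ gives only $\E\bigl(1_{\{\hat n \geq 1\}}/\hat n\bigr) \leq 2/\mu + o(1/\mu)$, which combined with the Jensen lower bound traps the quantity between $(1+o(1))/\mu$ and $(2+o(1))/\mu$. In the regime where the variance dominates the MSE this only shows the ratio of the two mean squared errors lies in $[1,2]$ asymptotically, not that it converges to $1$. The repair is exactly what the paper does: truncate at $(1\pm\eps)\hat N p$ for arbitrary $\eps > 0$, obtain upper and lower bounds $\tfrac{1}{1\mp\eps} + o(1)$, and then let $\eps \downto 0$ at the end. (Your alternative of citing the exact negative-moment asymptotic $\E\bigl(1/\hat n \bigm| \hat n \geq 1\bigr) = \mu^{-1}\bigl(1 + O(\mu^{-1})\bigr)$ would also close the gap.) One further small point in your favour relative to the paper's generality: the proposition is stated for arbitrary sequences with $\hat N p \to \infty$, including $p$ bounded away from $0$, so the final bookkeeping should use the exact identities $\bias(\hYn) = y_\delta - y$ and $\Var(\hYn) = \sigma_\delta^2 / \lfloor \hat N p \rfloor$ rather than the $\delta \downto 0$ asymptotics from lemma~\ref{l:bias} and corollary~\ref{C:variance}; your argument survives this substitution without change.
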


\begin{proof}
  We start the proof by comparing the variances of the two estimates
  $\hYN$ and $\hYn$.  For the estimate with a fixed number of
  proposals we find
  \begin{align*}
    \Var\bigl(\hYN\bigr)
    &= \E\Bigl(\Var\condl{\hYN}{\hat n}\Bigr)
            + \Var\Bigl(\E\condl{\hYN}{\hat n}\Bigr) \\
    &= \E\Bigl(\frac{\sigma_\delta^2}{\hat n} 1_{\{\hat n > 0\}}\Bigr)
    + (\hat c - y_\delta)^2 P(\hat n > 0) P(\hat n = 0),
  \end{align*}
  where $\sigma_\delta^2 = \Var\condl{h(\theta)}{S\in
    B(s^\ast,\delta)}$ and $y_\delta = \E\condl{h(\theta)}{S\in
    B(s^\ast,\delta)}$.  Thus,
  \begin{equation}\label{eq:Nhat-Var}
    \begin{split}
      \frac{\Var\bigl(\hYN\bigr)}{\Var\bigl(\hYn\bigr)}
      &= \frac{\E\Bigl(\frac{\sigma_\delta^2}{\hat n} 1_{\{\hat n > 0\}}\Bigr)
                 + (\hat c - y_\delta)^2 P(\hat n > 0) P(\hat n = 0)}%
              {\frac{\sigma_\delta^2}{\lfloor\hat N p\rfloor}} \\
      &\leq \E\Bigl(\frac{\hat N p}{\hat n} 1_{\{\hat n > 0\}}\Bigr)
        + \frac{(\hat c - y_\delta)^2}{\sigma_\delta^2}
                    \lfloor\hat N p\rfloor (1-p)^{\hat N}.
    \end{split}
  \end{equation}
  Our aim is to show that the right-hand side of this equation
  converges to~$1$.  To see this, we split the right-hand side into
  four different terms.  First, since $\log\bigl( (1-p)^{\hat N}\bigr)
  = \hat N \log(1-p) \leq -\hat N p$, we have $(1-p)^{\hat N} \leq
  \exp(-\hat N p)$ and thus
  \begin{equation}\label{eq:Nhat-term0}
    \lfloor\hat N p\rfloor (1-p)^{\hat N} \to 0,
  \end{equation}
  \textit{i.e.}\ the right-most term in \eqref{eq:Nhat-Var} disappears
  as $\hat N p \to\infty$.  Now let $\eps > 0$.  Then we have
  \begin{equation}\label{eq:Nhat-term1}
    \E\Bigl(\frac{\hat N p}{\hat n} 1_{\{\hat n > (1+\eps)\hat N p\}}\Bigr)
    \leq \frac{1}{1+\eps} \cdot P\Bigl(\frac{\hat n}{\hat N p} > 1+\eps\Bigr) \\
    \leq \frac{1}{\eps^2} \Var\Bigl(\frac{\hat n}{\hat N p}\Bigr)
    \to 0
  \end{equation}
  as $\hat N p\to\infty$ by Chebyshev's inequality.  For the lower
  tails of $\hat n / \hat N p$ we find
  \begin{align*}
    \E\Bigl(\frac{\hat N p}{\hat n}
                1_{\{0 < \hat n \leq (1-\eps)\hat N p\}}\Bigr)
    &\leq \hat N p P\Bigl( 0 < \hat n \leq (1-\eps)\hat N p \Bigr) \\
    &\leq (1-\eps) (\hat N p)^2
            P\Bigl( \hat n = \bigl\lfloor (1-\eps)\hat N p \bigr\rfloor \Bigr).
  \end{align*}
  Using the relative entropy
  \begin{equation*}
    H\cond{q}{p}
    = q \log\Bigl(\frac{q}{p}\Bigr) + (1-q)\log\Bigl(\frac{1-q}{1-p}\Bigr),
  \end{equation*}
  lemma~2.1.9 in~\citet{DeZei98} states $P(\hat n = k) \leq
  \exp\bigl(-\hat N H\cond{k/\hat N}{p}\bigr)$ and since $q \mapsto
  H\cond{q}{p}$ is decreasing for $q < p$ we get
  \begin{equation*}
    \E\Bigl(\frac{\hat N p}{\hat n} 1_{\{0 < \hat n \leq (1-\eps)\hat N p\}}\Bigr)
    \leq (1-\eps) (\hat N p)^2 \exp\Bigl(-\hat N \cdot H\condl{(1-\eps)p}{p}\Bigr).
  \end{equation*}
  It is easy to show that $H\condl{(1-\eps)p}{p} / p > \eps +
  (1-\eps)\log(1-\eps) > 0$ and thus
  \begin{equation}
    \begin{split}
      \lhs \E\Bigl(\frac{\hat N p}{\hat n}
                          1_{\{0 < \hat n \leq (1-\eps)\hat N p\}}\Bigr) \\
      &\leq (1-\eps) (\hat N p)^2 \exp\Bigl(-\hat N p
                          \bigl( \eps + (1-\eps)\log(1-\eps) \bigr) \Bigr)
      \longrightarrow 0
    \end{split}
  \end{equation}
  as $\hat Np \to \infty$.  Finally, we find
  \begin{equation}\label{eq:Nhat-term3a}
    \E\Bigl(\frac{\hat N p}{\hat n}
            1_{\{(1-\eps)\hat N p < \hat n \leq (1+\eps)\hat N p\}}\Bigr)
    \leq \frac{1}{1-\eps}
            P\Bigl( \bigl| \frac{\hat n}{\hat Np} - 1 \bigr| \leq \eps \Bigr)
    \longrightarrow \frac{1}{1-\eps}
  \end{equation}
  and
  \begin{equation}\label{eq:Nhat-term3b}
    \E\Bigl(\frac{\hat N p}{\hat n}
              1_{\{(1-\eps)\hat N p \leq \hat n \leq (1+\eps)\hat N p\}}\Bigr)
    \geq \frac{1}{1+\eps}
              P\Bigl( \bigl| \frac{\hat n}{\hat Np} - 1 \bigr| \leq \eps \Bigr)
    \longrightarrow \frac{1}{1+\eps}
  \end{equation}
  as $\hat N p\to \infty$.  Substituting equations
  \eqref{eq:Nhat-term0} to~\eqref{eq:Nhat-term3b} into
  \eqref{eq:Nhat-Var} and letting $\eps \downto0$ we finally get
  \begin{equation*}
    \frac{\Var\bigl(\hYN\bigr)}{\Var\bigl(\hYn\bigr)}
    \longrightarrow 1
  \end{equation*}
  as $\hat N p\to \infty$.

  From the previous sections we know $\bias(\Yn) = y_\delta - y$.
  Since
  \begin{equation*}
    \bias(\hYN)
    = \E\bigl( \E\cond{\hYN}{\hat n} - y \bigr)
    = (y_\delta - y) P(\hat n > 0) + (\hat c - y) P(\hat n = 0),
  \end{equation*}
  and since we already have seen $P(\hat n = 0) = (1-p)^{\hat N} =
  o(1/\hat Np)$, we find
  \begin{equation}\label{eq:Nhat-MSE1}
    \begin{split}
      \frac{\MSE(\hYN)}{\MSE(\hYn)}
      &= \frac{\Var(\hYN) + \bias(\hYN)^2}{\Var(\hYn) + \bias(\hYn)^2} \\
      &= \frac{\frac{\sigma_\delta^2}{\lfloor \hat N p\rfloor}\bigl(1+o(1)\bigr)
        + (y_\delta - y)^2 \bigl(1 + o(1)\bigr) + o( 1 / \hat N p)}
      {\frac{\sigma_\delta^2}{\lfloor \hat N p\rfloor} + (y_\delta - y)^2} \\
      &= \frac{\sigma_\delta^2\bigl(1+o(1)\bigr)
        + \lfloor \hat N p\rfloor(y_\delta - y)^2 \bigl(1 + o(1)\bigr) + o(1)}
      {\sigma_\delta^2 + \lfloor \hat N p\rfloor(y_\delta - y)^2}.
    \end{split}
  \end{equation}
  Let $\eps > 0$ and let $\hat N p$ be large enough that all $o(1)$
  terms in~\eqref{eq:Nhat-MSE1} satisfy $-\eps < o(1) < \eps$.  Then
  we have
  \begin{equation*}
    \frac{\MSE(\hYN)}{\MSE(\hYn)}
    \leq \frac{\sigma_\delta^2\bigl(1+\eps\bigr)
      + \lfloor \hat N p\rfloor(y_\delta - y)^2 \bigl(1 + \eps\bigr) + \eps}
    {\sigma_\delta^2 + \lfloor \hat N p\rfloor(y_\delta - y)^2}
    \leq \frac{\sigma_\delta^2(1+\eps) + \eps}{\sigma_\delta^2},
  \end{equation*}
  where the second inequality is found by maximising the function $x
  \mapsto \bigl(\sigma_\delta^2(1+\eps) + x(1+\eps) +
  \eps\bigr)/(x+\sigma_\delta^2)$.  Similarly, we find
  \begin{equation*}
    \frac{\MSE(\hYN)}{\MSE(\hYn)}
    \geq \frac{\sigma_\delta^2\bigl(1-\eps\bigr)
      + \lfloor \hat N p\rfloor(y_\delta - y)^2 \bigl(1 - \eps\bigr) + \eps}
    {\sigma_\delta^2 + \lfloor \hat N p\rfloor(y_\delta - y)^2}
    \geq \frac{\sigma_\delta^2(1-\eps) - \eps}{\sigma_\delta^2}
  \end{equation*}
  for all sufficiently large $\hat N p$.  Using these bounds, taking
  the limit $\hat N p \to \infty$ and finally taking the limit $\eps
  \downto 0$ we find
  \begin{equation*}
    \frac{\MSE(\hYN)}{\MSE(\hYn)}
    \longrightarrow 1
  \end{equation*}
  as $\hat N p \to \infty$.  This completes the proof.
\end{proof}

We note that the result of proposition~\ref{p:MSE-fixed-N} does not
require $p$ to converge to~$0$.  In this case neither
$\MSE\bigl(\hYN\bigr)$ nor $\MSE\bigl(\hYn\bigr)$ converges to~$0$,
but the proposition still shows that the difference between fixed $N$
and fixed $n$ is asyptotically negligible.

\begin{corollary}\label{c:rate-hat}
  Let the assumptions of theorem~\ref{t:rate} hold and let
  $(\delta_{\hat N})_{\hat N\in\N}$ be a sequence with $\delta_{\hat
    N} \sim \hat N^{-1/(q+4)}$, where $\hat N$ denotes the (fixed)
  number of proposals.  Then the mean squared error satisfies
  \begin{equation*}
    \MSE\bigl(\hat Y_{\hat N}^{\delta_{\hat N}}\bigr)
    \sim \cost\bigl(\hat Y_{\hat N}^{\delta_{\hat N}}\bigr)^{-4/(q+4)}
  \end{equation*}
  as $\hat N\to\infty$, and the given choice of $\delta_{\hat N}$ is
  optimal in the sense of theorem~\ref{t:rate}.
\end{corollary}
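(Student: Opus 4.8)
The plan is to combine Proposition~\ref{p:MSE-fixed-N} (which says the MSE is asymptotically the same whether we fix $\hat N$ or the corresponding expected number of accepted samples $n = \lfloor \hat N p\rfloor$) with the rate already established in Lemma~\ref{l:optimal} and Lemma~\ref{l:bound}, and then translate the scaling $\delta_{\hat N} \sim \hat N^{-1/(q+4)}$ into the scaling $\delta_n \sim n^{-1/4}$ that those lemmata require. First I would record that, with $\hat N$ fixed, the cost is deterministic, $\cost(\hat Y^{\delta_{\hat N}}_{\hat N}) \sim \hat N$, so the statement to be proved is simply $\MSE(\hat Y^{\delta_{\hat N}}_{\hat N}) \sim \hat N^{-4/(q+4)}$. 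Next I would compute the number $n$ of accepted samples one expects: by Lemma~\ref{l:cost} (or its internal computation) $p = P(S(X)\in B(s^\ast,\delta_{\hat N})) \sim \delta_{\hat N}^q \sim \hat N^{-q/(q+4)}$, hence $n := \lfloor \hat N p\rfloor \sim \hat N \cdot \hat N^{-q/(q+4)} = \hat N^{4/(q+4)}$. In particular $n\to\infty$, so $\hat N p\to\infty$ and Proposition~\ref{p:MSE-fixed-N} applies.

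The key observation is that this $n$ is exactly the right size to match the hypotheses of Lemma~\ref{l:optimal}: since $n \sim \hat N^{4/(q+4)}$ we get $n^{-1/4} \sim \hat N^{-1/(q+4)} \sim \delta_{\hat N}$, i.e.\ the tolerance sequence $\delta_{\hat N}$, re-indexed by $n$, satisfies $\delta_n \sim n^{-1/4}$. Therefore Lemma~\ref{l:optimal} gives $\MSE(\hat Y^{\delta_n}_n) \sim n^{-1}$ and $\E(\cost(\hat Y^{\delta_n}_n)) \sim n^{1+q/4}$. Combining $\MSE(\hat Y^{\delta_n}_n)\sim n^{-1}\sim \hat N^{-4/(q+4)}$ with Proposition~\ref{p:MSE-fixed-N}, which says $\MSE(\hat Y^{\delta_{\hat N}}_{\hat N})/\MSE(\hat Y^{\delta_n}_n)\to 1$, yields $\MSE(\hat Y^{\delta_{\hat N}}_{\hat N}) \sim \hat N^{-4/(q+4)} \sim \cost(\hat Y^{\delta_{\hat N}}_{\hat N})^{-4/(q+4)}$, which is the claimed rate.

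For optimality, I would argue contrapositively along the lines of Lemma~\ref{l:bound}. Given an arbitrary sequence $\delta_{\hat N}\downto 0$, set $n = \lfloor \hat N p\rfloor$ with $p\sim\delta_{\hat N}^q$; one checks $\hat N \sim n/p \sim n\,\delta_{\hat N}^{-q}$, so $\cost(\hat Y^{\delta_{\hat N}}_{\hat N})\sim \hat N \sim n\,\delta_{\hat N}^{-q} \sim \E(\cost(\hat Y^{\delta_n}_n))$ by Lemma~\ref{l:cost}. Then Lemma~\ref{l:bound} gives $\MSE(\hat Y^{\delta_n}_n) \geq c\,\E(\cost(\hat Y^{\delta_n}_n))^{-4/(q+4)}$ for large $n$, and Proposition~\ref{p:MSE-fixed-N} transfers this lower bound to $\MSE(\hat Y^{\delta_{\hat N}}_{\hat N})$, giving $\liminf \MSE(\hat Y^{\delta_{\hat N}}_{\hat N})/\cost(\hat Y^{\delta_{\hat N}}_{\hat N})^{-4/(q+4)} > 0$. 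The main obstacle — and the only place needing care — is checking that Proposition~\ref{p:MSE-fixed-N} genuinely applies along these sequences: one must verify $\hat N p\to\infty$ in both parts (immediate from $n\to\infty$, which follows because $\hat N\to\infty$ dominates $p = \delta_{\hat N}^q(1+o(1))\downto 0$ only polynomially), and one must be mildly careful that the floors in $n = \lfloor\hat N p\rfloor$ and the $(1+o(1))$ factors from Lemma~\ref{l:cost} do not disturb the $\sim$-relations, which they do not since $\sim$ ignores constants and lower-order corrections.
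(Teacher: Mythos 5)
Your argument for the first statement is correct and follows essentially the same route as the paper: show $\hat N p_{\hat N} \sim \hat N^{4/(q+4)} \to \infty$ so that proposition~\ref{p:MSE-fixed-N} applies, observe that $\delta_{\hat N} \sim \lfloor \hat N p_{\hat N}\rfloor^{-1/4}$ so that lemma~\ref{l:optimal} applies to the fixed-$n$ estimator $Y^{(\delta_{\hat N})}_{\lfloor\hat N p_{\hat N}\rfloor}$, and transfer the rate across; the identification $\cost(\hat Y_{\hat N}^{\delta_{\hat N}}) \sim \hat N \sim \E\bigl(\cost(Y^{(\delta_{\hat N})}_{\lfloor\hat N p_{\hat N}\rfloor})\bigr)$ is also as in the paper. (A minor point of hygiene: lemma~\ref{l:optimal} is stated for sequences indexed by $n$, so you are implicitly applying it along the subsequence $n(\hat N) = \lfloor\hat N p_{\hat N}\rfloor$; since $n(\hat N)\to\infty$ this is harmless, and the paper elides it in the same way.)

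The optimality part, however, contains a genuine gap. You assert that for an \emph{arbitrary} sequence $\delta_{\hat N}\downto 0$ one automatically has $\hat N p_{\hat N}\to\infty$, on the grounds that $p_{\hat N} = \delta_{\hat N}^q\bigl(1+o(1)\bigr)$ "decays only polynomially". Nothing forces this: $\delta_{\hat N}$ is arbitrary, and if for instance $\delta_{\hat N} = e^{-\hat N}$ then $\hat N p_{\hat N}\to 0$. In that regime proposition~\ref{p:MSE-fixed-N} does not apply (its hypothesis is precisely $\hat N p\to\infty$), the expected number of accepted samples stays bounded, and your transfer of lemma~\ref{l:bound} to the fixed-$\hat N$ estimator collapses. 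The paper closes this hole with a dichotomy: along subsequences where $\hat N p_{\hat N}\to\infty$ one argues as you do, while if $\hat N p_{\hat N}$ stays bounded then $\MSE\bigl(\hat Y_{\hat N}^{\delta_{\hat N}}\bigr)$ remains bounded away from $0$ (the Monte Carlo error cannot vanish when the expected number of acceptances is bounded), whereas $\cost\bigl(\hat Y_{\hat N}^{\delta_{\hat N}}\bigr)^{-4/(q+4)} \sim \hat N^{-4/(q+4)} \to 0$, so the ratio in question tends to $+\infty$ and the liminf is trivially positive. You need to add this second case, or otherwise justify why it cannot occur, before the optimality claim is proved.
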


\begin{proof}
  Let $p_{\hat N} = P\bigl(S(X) \in B(s^\ast, \delta_{\hat N})\bigr)$.
  Then
  \begin{equation*}
    \hat N p_{\hat N}
    \sim \hat N \delta_{\hat N}^q
    \sim \hat N ^ {1 - q/(q+4)}
    = \hat N ^ {4/(q+4)}
    \to \infty
  \end{equation*}
  and thus we can apply proposition~\ref{p:MSE-fixed-N}.  Similarly,
  we have
  \begin{equation*}
    \delta_{\hat N}
    \sim \hat N ^ {- 1/(q+4)}
    \sim (\hat N p_{\hat N})^{-1/4}
    \sim \lfloor\hat N p_{\hat N}\rfloor^{-1/4}
  \end{equation*}
  and thus we can apply theorem~\ref{t:rate}.  Using these two results
  we get
  \begin{equation*}
    \MSE\bigl( \hat Y_{\hat N}^{\delta_{\hat N}} \bigr)
    \sim \MSE\bigl( Y^{(\delta_{\hat N})}_{\lfloor\hat N p_{\hat N}\rfloor} \bigr)
    \sim \E\Bigl(\cost\bigl( Y^{(\delta_{\hat N})}_{\lfloor\hat N
                  p_{\hat N}\rfloor} \bigr)\Bigr)^{-4/(q+4)}.
  \end{equation*}
  Since $\hat Y_{\hat N}^{\delta_{\hat N}}$ is always computed using
  $\hat N$ proposals we have
  \begin{equation*}
    \cost\Bigl(\hat Y_{\hat N}^{\delta_{\hat N}}\Bigr)
    \sim \hat N
  \end{equation*}
  and thus, using lemma~\ref{l:cost}, we find
  \begin{equation*}
    \E\Bigl(\cost\bigl( Y^{(\delta_{\hat N})}_{\lfloor\hat N p_{\hat N}\rfloor} \bigr)\Bigr)
    \sim \bigl\lfloor\hat N p_{\hat N}\bigr\rfloor \delta_{\hat N}^{-q}
    \sim \hat N
    \sim \cost\Bigl(\hat Y_{\hat N}^{\delta_{\hat N}}\Bigr).
  \end{equation*}
  This completes the proof of the first statement of the corollary.
  For the second statement there are two cases.  If $\delta_{\hat N}$
  is chosen such that $\hat N p_{\hat N} \to \infty$ for a
  sub-sequence, the statement is a direct consequence of
  proposition~\ref{p:MSE-fixed-N}.  Finally, if $\hat N p_{\hat N}$ is
  bounded, $\MSE\bigl( \hat Y_{\hat N}^{\delta_{\hat N}}\bigr)$ stays
  bounded away from~$0$ and thus
  \begin{equation*}
    \frac{\MSE\bigl(\hat Y_{\hat N}^{\delta_{\hat N}}\bigr)}
         {\cost\bigl(\hat Y_{\hat N}^{\delta_{\hat N}}\bigr)^{-4/(q+4)}}
    \longrightarrow \infty > 0
  \end{equation*}
  as $\hat N\to \infty$.  This completes the proof of the second
  statement.
\end{proof}

%%%%%%%%%%%%%%%%%%%%%%%%%%%%%%%%%%%%%%%%%%%%%%%%%%%%%%%%%%%%%%%%%%%%%%

\section{Numerical Experiments}
\label{S:experiments}

To illustrate the results from section~\ref{S:results}, we present a
series of numerical experiments for the following toy problem.
\begin{itemize}
\item We choose $p=1$, and assume that our prior belief in the value
  of the single parameter $\theta$ is standard normally distributed.
\item We choose $d=2$, and assume the data $X$ to be composed of
  i.i.d.\ samples $X_1, X_2$ each with distribution $X_i \,|\, \theta
  \sim \CN(\theta , 1)$.
\item We choose $q=2$, and the (non-minimal) sufficient statistic to
  be $S(x) = x$ for all $x\in\R^2$.
\item We consider the test function
  $h(\theta)=1_{[-1/2,1/2]}(\theta)$, \textit{i.e.}\ the indicator
  function for the region $[-1/2,1/2]$. The ABC estimate is thus an
  estimate for the posterior probability $P\bigl( \theta\in [-1/2,1/2]
  \bigm| S=s^\ast \bigr)$.
\item We fix the observed data to be $s^*=(1,1)$.
\end{itemize}

This problem is simple enough that all quantities of interest can be
determined explicitly. In particular, $\theta|S$ is
$\CN\bigl((s_1+s_2)/3, 1/3\bigr)$ distributed, $\theta|S\!=\!s^*$ is
$\CN(2/3, 1/3)$ distributed, and $S$ is bivariate normally distributed
with mean~$0$ and covariance matrix
\begin{equation*}
  \Sigma = \begin{pmatrix}2&1\\1&2\end{pmatrix}.
\end{equation*}
Therefore, the prior and posterior expectation for $h(\theta)$ are
$\E\bigl(h(\theta)\bigr) = 0.3829$ and $\E\condl{h(\theta)}{S=s^\ast}
= 0.3648$, respectively.  Similarly, the constant from
lemma~\ref{l:bias} can be shown to be $C(s^*) = 0.0323$.

Assumption~A can be shown to hold. The function
\begin{equation*}
  \phi_1(s)
  = f_S(s)
  = \frac{1}{2\pi\sqrt{3}} e^{-\frac{1}{3} (s_1^2-s_1s_2+s_2^2)}
\end{equation*}
is multivariate normal, so its third derivatives exist, and are
bounded and continuous.  Similarly, the function
\begin{equation*}
  \phi_h(s)
  = \int_{-1/2}^{1/2} f_{\theta,S}(t,s) dt \leq \phi_1(s)
\end{equation*}
also has bounded and continuous third derivatives. Thus, the
assumptions hold.

The programs used to perform the simulations described in this
section, written using the R environment \citep{R} are available as
supplementary material.

\subsection*{Experiment 1}

Our first experiment validates the statement about the bias given in
lemma~\ref{l:bias}.  Since we know the exact posterior expectation,
the bias can be determined experimentally. For fixed $\delta$, we
generate $k$ independent ABC estimates, each based on $n$
proposals. For each of the $k$ estimates, we calculate its distance
from the true posterior expectation. We then calculate the mean and
standard error of these differences to obtain a Monte Carlo estimate
of the bias.

Repeating this procedure for several values of $\delta$, we can
produce a plot of the estimated bias against $\delta$, with $95\%$
error bars. Figure~\ref{fig:parabola} shows the result of a
simulation, using $n=500$ samples for each ABC estimate and $k=5000$
ABC estimates for each value of~$\delta$.  For comparison, the plot
includes the theoretically predicted asymptotic bias $C(s^\ast)
\delta^2$, using the value~$C(s^*) = 0.0323$.  The plot shows that for
small values of $\delta$ the theoretical curve is indeed a good fit to
the numerical estimates of the bias.  The result of the lemma is only
valid as $\delta \downto 0$ and indeed the plot shows a discrepancy
for larger values.  This discrepancy is caused by only a small
fraction of the sample being rejected; as $\delta$ increases, the
distribution of the ABC samples approaches the prior distribution.

\begin{figure}
  \begin{center}
    \includegraphics{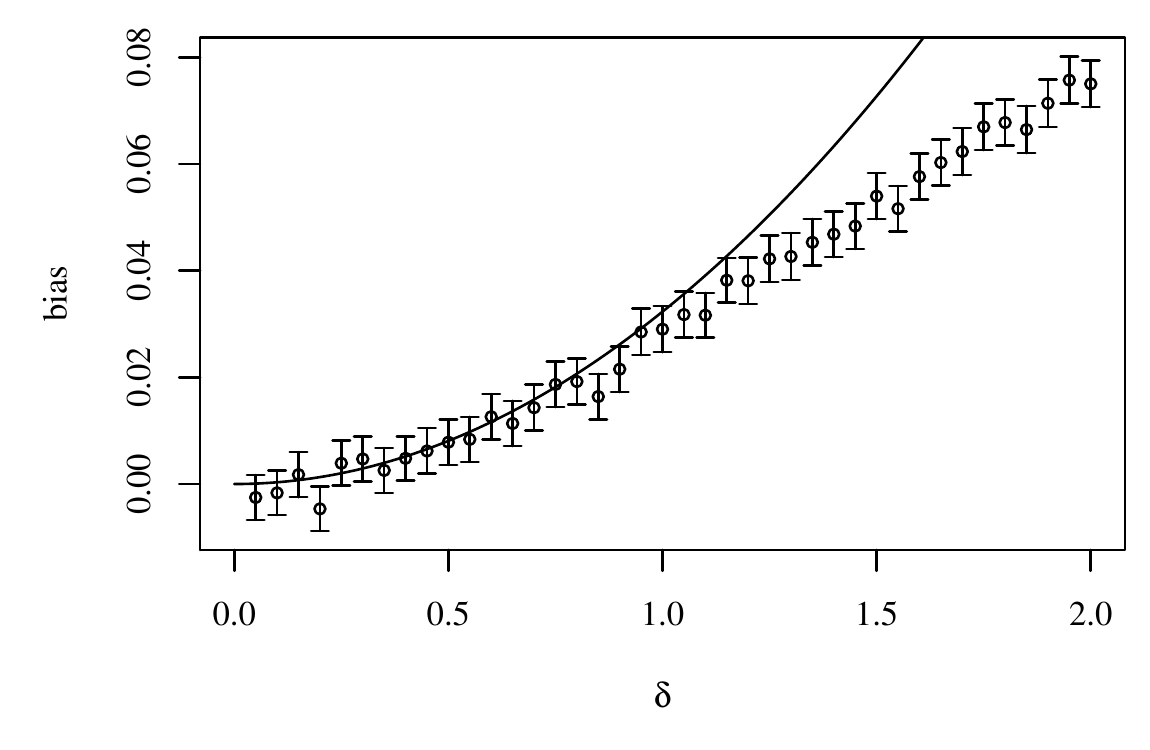}
  \end{center}
  \caption{\label{fig:parabola}Simulation results illustrating the
    relationship between bias and $\delta$. Circles give the mean
    empirical bias from $5000$ simulations for each value
    of~$\delta$.  The error bars indicate mean $\pm 1.96$ standard
    errors.  The solid line shows the theoretically predicted asymptotic
    bias from lemma~\ref{l:bias}.}
\end{figure}

\subsection*{Experiment 2}

Our second experiment validates the statement of theorem~\ref{t:rate},
by numerically estimating the optimal choice of delta and the
corresponding MSE.

For fixed values of expected computational cost and $\delta$, we
estimate the mean squared error by generating $k$ different ABC
estimates and taking the mean of their squared distance from the true
posterior expectation.  This reflects how the bias is estimated in
experiment~1. Repeating this procedure for several values of $\delta$,
the estimates of the MSE are plotted against~$\delta$.

Our aim is to determine the optimal value of $\delta$ for fixed
computational cost.  From lemma~\ref{l:cost} we know that the expected
cost is of order $n \delta^{-q}$ and thus we choose $n \sim \delta^2$
in this example.  From lemma~\ref{l:bias} we know that $\bias \sim
\delta^2$.  Thus, we expect the MSE for constant expected cost to be
of the form
\begin{equation}\label{eq:fitted}
  \MSE(\delta)
  = \frac{\Var}{n} + \bias^2
  = a\delta^{-2} +b\delta^4
\end{equation}
for some constants $a$ and~$b$.  Thus, we fit a curve of this form to
the numerically estimated values of the MSE.  The result of one such
simulation, using $k=500$ samples for each~$\delta$, is shown in
figure~\ref{fig:opt-delta}.  The curve fits the data well.

\begin{figure}
  \begin{center}
    \includegraphics{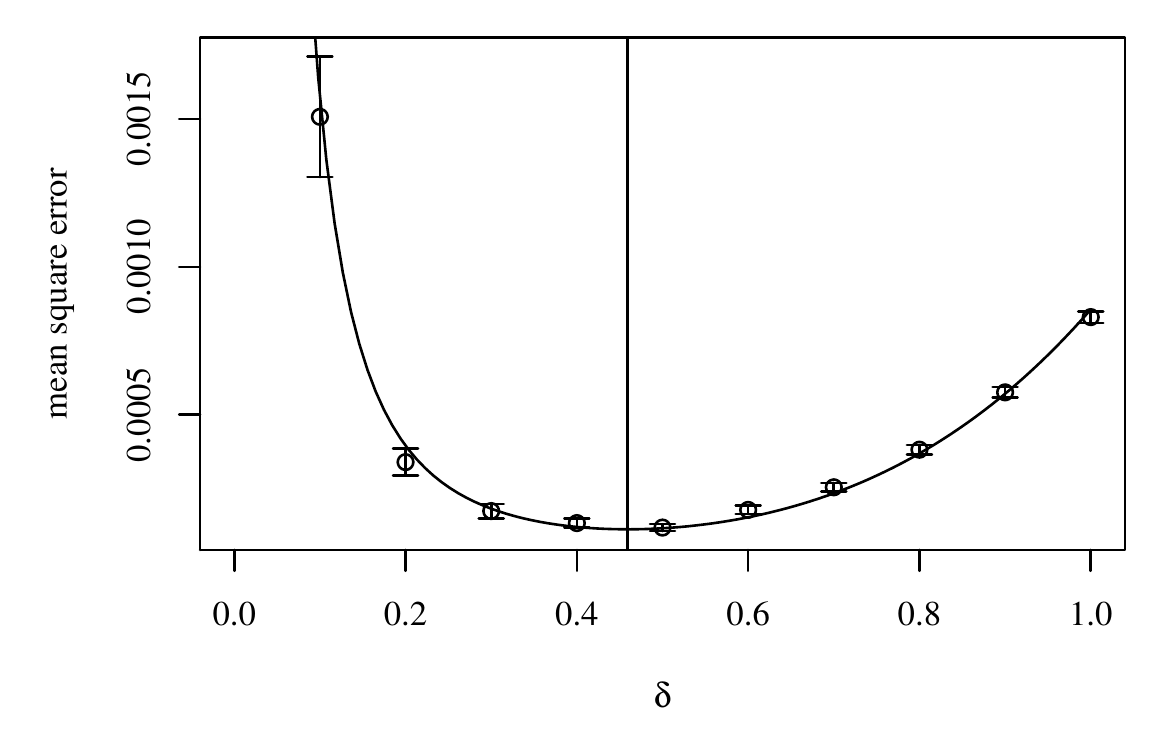}
  \end{center}
  \caption{\label{fig:opt-delta}The estimated MSE as a function of the
    tolerance~$\delta$ for fixed expected cost.
    The fitted curve has the form given in equation~\eqref{eq:fitted}.
    The location of the optimal~$\delta$ is marked by the vertical line.}
\end{figure}

We estimate the optimal values of $\delta$ and MSE, given an expected
computational cost, to be those at the minimum of the fitted curve.
Given the good fit between the predicted form~\eqref{eq:fitted} of the
curve and the empirical MSE values, this procedure promises to be a
robust way to estimate the optimal value of $\delta$.  The direct
approach would likely require a much larger number of samples to be
accurate.

Repeating the above procedure for a range of values of expected cost
gives corresponding estimates for the optimal values of $\delta$ and
the MSE as a function of expected cost.  We expect the optimal
$\delta$ and the MSE to depend on the cost like $x=A \cdot
\mathrm{cost}^B$.  To validate the statements of theorem~\ref{t:rate}
we numerically estimate the exponent~$B$ from simulated data.  The
result of such a simulation is shown in figure~\ref{fig:rates}.  The
data are roughly on straight lines, as expected, and the gradients are
close to the theoretical gradients, shown as smaller lines. The
numerical results for estimating the exponent~$B$ are given in the
following table.
\begin{center}
  \begin{tabular}{r|c|c|c}
    Plot&Gradient&Standard error&Theoretical gradient \\ \hline
    $\delta$	&$-0.167$	&$0.0036$	&$-1/6 \approx -0.167$ \\
    MSE	&$-0.671$	&$0.0119$	&$-2/3 \approx -0.667$
  \end{tabular}
\end{center}
The table shows an an excellent fit between empirical and
theoretically predicted values.

\begin{figure}
  \begin{center}
    \includegraphics{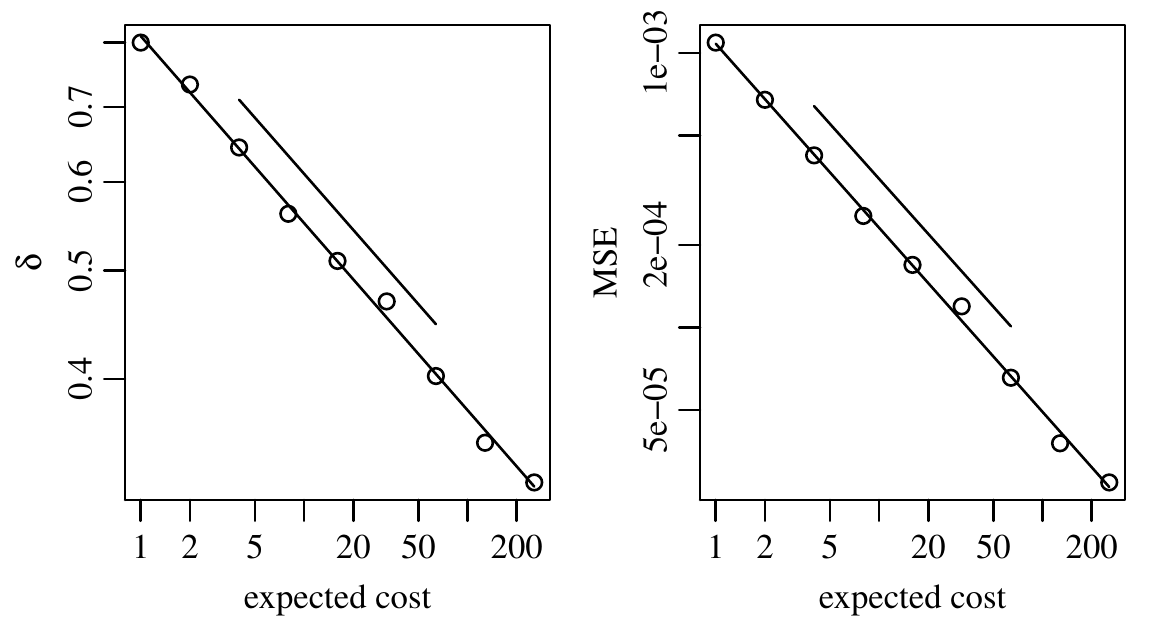}
  \end{center}
  \caption{\label{fig:rates}Estimated dependency of the optimal
    $\delta$ and of the corresponding MSE on the computational cost.
    The computational cost is given in arbitrary units, chosen such
    that the smallest sample size under consideration has
    cost~$1$. For comparison, the additional line above the fit has
    the gradient expected from the theoretical results.}
\end{figure}

%%%%%%%%%%%%%%%%%%%%%%%%%%%%%%%%%%%%%%%%%%%%%%%%%%%%%%%%%%%%%%%%%%%%%%

\section{Discussion}
\label{S:discussion}

While the work of this article is mostly theoretical in nature, the
results can be used to guide the design of an analysis using ABC
methods.

Our results can be applied directly in cases where a pilot run is used
to tune the algorithm.  The performance of the ABC algorithm depends
both on the number~$n$ of samples accepted and on the
tolerance~$\delta$.  From theorem~\ref{t:rate} we know that optimality
is achieved by choosing $\delta$ proportional to $n^{-1/4}$.
Consequently, if the full run is performed by increasing the number of
accepted ABC samples by a factor of~$k$, then the tolerance~$\delta$
for the full run should be divided by~$k^{1/4}$.  In this case the
expected running time satisfies
\begin{equation*}
  \mathrm{cost} \sim n\delta^{-q} \sim n^{(q+4)/4}
\end{equation*}
and, using lemma~\ref{l:cost}, we have
\begin{equation*}
  \mathrm{error}
  \sim \mathrm{cost}^{-2 / (q+4)}
  \sim n^{-1/2}.
\end{equation*}
There are two possible scenarios:
\begin{itemize}
\item If we want to reduce the root mean-squared error by a factor
  of~$\alpha$, we should increase the number $n$ of accepted ABC
  samples by $\alpha^2$ and reduce the tolerance~$\delta$ by a factor
  of~$(a^2)^{1/4} = \sqrt{\alpha}$.  These changes will increase the
  expected running time of the algorithm by a factor
  of~$\alpha^{(q+4)/2}$.
\item If we plan to increase the (expected) running time by a factor
  of~$\beta$, we should increase the number of accepted samples by a
  factor of~$\beta^{4/(q+4)}$ and divide $\delta$
  by~$\beta^{1/(q+4)}$.  These changes will reduce the root mean
  squared error by a factor of~$\beta^{2/(q+4)}$.
\end{itemize}
These guidelines will lead to a choice of parameters which, at least
asymptotically, maximises the efficiency of the analysis.

%%%%%%%%%%%%%%%%%%%%%%%%%%%%%%%%%%%%%%%%%%%%%%%%%%%%%%%%%%%%%%%%%%%%%%

\vskip2\baselineskip
\noindent
\textbf{Acknowledgements.} The authors thank Alexander Veretennikov
for pointing out that the Lebesgue differentiation theorem can be used
in the proof of proposition~\ref{p:convergence}.  MW was supported by
an EPSRC Doctoral Training Grant at the School of Mathematics,
University of Leeds.

\bibliographystyle{plainnat}
\bibliography{references}

\end{document}